\newtheorem{thm}{Theorem}[section]
\newtheorem{cor}[thm]{Corollary}
\newtheorem{prop}[thm]{Proposition}
\theoremstyle{definition}
\newtheorem{example}[thm]{Example}
\newtheorem{defn}[thm]{Definition}
\newtheorem{rem}[thm]{Remark}
\numberwithin{equation}{thm}
\def\ggg{\mathfrak{g}}
\def\nnn{\mathfrak{n}}
\def\hhh{\mathfrak{h}}
\def\bbb{\mathfrak{b}}
\def\bbz{\mathbb Z}
\def\bbc{\mathbb C}
\begin{document}

\title{The BGG Category for Generalised Reductive Lie Algebras}

\author{Ye Ren}

\keywords{generalised reductive Lie algebras, BGG category, BGG reciprocity, BGG strongly linked theorem, projective modules}

\begin{abstract}
A Lie algebra is said to be generalised reductive if it is a direct sum of a semisimple Lie algebra and a commutative radical. In this paper we extend the BGG category $\mathcal{O}$ over complex semisimple Lie algebras to the category $\mathcal {O'}$ over complex generalised reductive Lie algebras. Then we make a preliminary research on the highest weight modules and the projective modules in this new category $\mathcal {O'}$, and generalize some conclusions in the classical case. As a critical difference from the complex semisimple Lie algebra case, we prove that there is no projective module in $\mathcal {O'}$.
\end{abstract}

\maketitle

\section{Introduction}
In the Lie theory, BGG category $\mathcal{O}$ is a very important topic. It was first proposed by Joseph Bernstein, Israel Gelfand, Sergei Gelfand. (cf. \cite{BGG}). Nowadays, BGG category $\mathcal{O}$ has been applied in geometry, categorization, Kac-Moody algebra, quantum groups, physical mathematics and other aspects. In this paper, we say that a Lie algebra is generalised reductive if it is a direct sum of a semisimple Lie algebra and a commutative radical. The definition of generalized reductive Lie algebra is influenced by the definition of semi-reductive Lie algebra in \cite{SXY} and \cite{OSY}. To research the BGG category $\mathcal{O}$ over generalised reductive Lie algebras, we establish a new category $\mathcal{O'}$. In category $\mathcal{O'}$, we give the definition of Verma modules, irreducible modules, highest weight modules and projective modules. Beside that, we research the irreducibility of modules in $\mathcal{O'}$, and generalize the BGG strongly linked theorem in $\mathcal{O'}$. We also consider the relationship of standard filtration and BGG reciprocity between the BGG category and the category $\mathcal{O'}$. Particularly, we prove that there is no projective module in $\mathcal{O'}$.

\section{Preliminaries}

We first give some notations. Let $\ggg$ be a finite dimensional Lie algebra over complex field $\bbc$. And $\ggg=\ggg_0\bigoplus J$ where $\ggg_0$ is a semisimple Lie algebra and $J$ is the radical of $\ggg$, moreover $[J, J]=0$. We say $\ggg$ is a generalised reductive Lie algebra. It is easy to see that if $J$ is the center of $\ggg$, then $\ggg$ is a reductive Lie algebra. Let $\ggg_0=\nnn_0^-\oplus\hhh_0\oplus\nnn_0$ be the Cartan decomposition of $\ggg_0$. Let $\bbb_0=\hhh_0\oplus\nnn_0$, $\bbb=\bbb_0\bigoplus J$, $\nnn=\nnn_0\bigoplus J$. It is easy to see that $\bbb$ is a maximal solvable subalgebra of $\ggg$.

Denote $\Phi$ to be the root system of $\ggg_0$, and let $\Phi^+$ be the positive root system, $\Phi^-$is the negative root system. We choose a simple root system $\Delta=\{\alpha_1,...,\alpha_l \}\subset \Phi$. Weyl group $W$ is generated by all the reflections according to $\Phi$. The elements in $W$ are denoted as $s_\alpha$($\alpha\in \Phi$). Denote $\rho=\frac{1}{2}\sum\limits_{\alpha\in \Phi^+} {\alpha}$.

According to the Cartan decomposition of $\ggg_0=\hhh_0\oplus\bigoplus\limits_{\alpha\in\Phi}{\ggg_0}_{\alpha}$, we can choose a set of generators of $\ggg_0$ according to simple root system, denote as $e_i\in{\ggg_0}_{\alpha_i}$, $f_i\in{\ggg_0}_{-\alpha_i}$, $[e_i, f_i]=h_i\in\hhh_0$$(i=1,...,l)$. Denote $U(\ggg)$ as the universal algebra of $\ggg$. And $U(\ggg)_0$ is the universal algebra of $\ggg_0$.

Recall that a BGG category $\mathcal{O}$ is a subcategory of Mod$U(\ggg_0)$ whose objects satisfy following conditions (cf. \cite{Hum}):

($\mathcal{O}$1) $M$ is a finitely generated $U(\ggg_0)$-module.

($\mathcal{O}$2) $M$ is $\hhh_0$-semisimple, that is, $M$ is a weight module: $M=\bigoplus\limits_{\mu\in{\hhh_0^*}} M_{\mu}$, $M_{\mu}=\{v\in M\mid h\cdot v=\mu(h)v, \forall h\in \hhh_0\}$, $M_{\mu}$ is a weight space.

($\mathcal{O}$3) $M$ is locally $\nnn_0$-finite: for each $v\in M$, the subspace $U(\nnn)\cdot v$ of $M$ is finite dimensional.

($\mathcal{O}$4) All weight spaces of $M$ are finite dimensional.

\begin{defn}
Category $\mathcal{O'}$ is a subcategory of Mod$U(\ggg)$ whose  objects satisfy following conditions:

($\mathcal{O'}$1) $M$ is a finitely generated $U(\ggg)$-module.

($\mathcal{O'}$2) $M$ is $\hhh_0$-semisimple, that is, $M$ is a weight module: $M=\bigoplus\limits_{\mu\in{\hhh_0^*}} M_{\mu}$, $M_{\mu}=\{v\in M\mid h\cdot v=\mu(h)v, \forall h\in \hhh_0\}$, $M_{\mu}$ is a weight space.

($\mathcal{O'}$3) $M$ is locally $\nnn$-finite: for each $v\in M$, the subspace $U(\nnn)\cdot v$ of $M$ is finite dimensional.

($\mathcal{O'}$4) All weight spaces of $M$ are finite dimensional.
\end{defn}

\begin{prop}
Suppose $M\in\mathcal{O'}$, then $M$ is a finitely generated $U(\ggg_0)$-module.
\end{prop}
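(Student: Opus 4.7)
The plan is to combine the PBW theorem with the local $\nnn$-finiteness axiom $(\mathcal{O'}3)$. Since $J$ is the radical of $\ggg$ and in particular an ideal, and $\ggg = \ggg_0 \oplus J$ as vector spaces, applying PBW to a basis of $\ggg$ obtained by concatenating a basis of $\ggg_0$ with a basis of $J$ yields a vector-space equality
\[
U(\ggg) \;=\; U(\ggg_0)\cdot U(J).
\]
Hence every element of $U(\ggg)$ can be written as a finite sum of products $x\,y$ with $x \in U(\ggg_0)$ and $y \in U(J)$.

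First, by $(\mathcal{O'}1)$ I would fix finitely many generators $v_1,\ldots,v_n$ of $M$ as a $U(\ggg)$-module. Using the PBW decomposition above,
\[
M \;=\; \sum_{i=1}^{n} U(\ggg)\, v_i \;=\; \sum_{i=1}^{n} U(\ggg_0)\bigl(U(J)\, v_i\bigr).
\]
Next, since $J \subseteq \nnn = \nnn_0 \oplus J$, the subspace $U(J)\, v_i$ is contained in $U(\nnn)\, v_i$, which is finite dimensional by $(\mathcal{O'}3)$. Setting $V := \sum_{i=1}^{n} U(J)\, v_i$ therefore yields a finite dimensional subspace of $M$ with $M = U(\ggg_0)\cdot V$. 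Any basis of $V$ then exhibits $M$ as a finitely generated $U(\ggg_0)$-module.

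The only step warranting a moment of care is the PBW rearrangement: one must use that $\ggg_0$ normalises $J$ (because $J$ is an ideal), so products $y\,x$ with $y \in U(J)$ and $x \in U(\ggg_0)$ can be rewritten as ordered sums $\sum x'y'$. This is automatic from PBW once ideal-ness of $J$ is recorded, so there is no genuine obstacle; the argument essentially packages axioms $(\mathcal{O'}1)$ and $(\mathcal{O'}3)$ together via PBW.
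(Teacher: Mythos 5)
Your proposal is correct and follows essentially the same route as the paper: both fix $U(\ggg)$-module generators $v_1,\dots,v_n$, use local $\nnn$-finiteness to get a finite-dimensional subspace ($U(J)v_i$ for you, $U(\nnn)v_i$ in the paper, the former sitting inside the latter), and then invoke the PBW factorisation $U(\ggg)=U(\ggg_0)U(J)$ to conclude that a basis of that subspace generates $M$ over $U(\ggg_0)$. Your write-up merely makes explicit the PBW rearrangement that the paper dismisses as obvious.
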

\begin{proof}
Suppose $U(\ggg)$-module $M$ has a set of generators $v_1,...,v_n$. Denote $V$ as a $U(\nnn)$-module generated by $v_1,...,v_n$. Then $V$ is finite dimensional. We can choose $w_1,..,w_m$ as a basis of $V$. It is obviously that $M$ is generated by $w_1,..,w_m$ as $U(\ggg_0)$-module.
\end{proof}

\begin{cor}
Suppose $M$ is an object of $\mathcal{O'}$ as $U(\ggg)$-module, then $M$ is an object of BGG category $\mathcal{O}$ as $U(\ggg_0)$-module.
\end{cor}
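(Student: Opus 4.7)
The plan is to verify the four defining conditions ($\mathcal{O}$1)--($\mathcal{O}$4) for $M$ viewed as a $U(\ggg_0)$-module, by directly comparing them against the corresponding conditions ($\mathcal{O}'$1)--($\mathcal{O}'$4) that $M$ already satisfies as a $U(\ggg)$-module. Three of the four conditions will be essentially immediate; only ($\mathcal{O}$1) requires a substantive input, and for that I would simply invoke the proposition proved just above.

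First I would dispatch ($\mathcal{O}$1): since $M \in \mathcal{O}'$, the previous proposition guarantees that $M$ is finitely generated as a $U(\ggg_0)$-module, so ($\mathcal{O}$1) holds. Next, ($\mathcal{O}$2) is literally the same statement as ($\mathcal{O}'$2) because both refer to the same Cartan subalgebra $\hhh_0$ of $\ggg_0$ and the same weight space decomposition; no argument is needed. Condition ($\mathcal{O}$4) likewise coincides with ($\mathcal{O}'$4) word-for-word, so it transfers directly.

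The only step with any real content is ($\mathcal{O}$3), local $\nnn_0$-finiteness. Here I would use the inclusion $\nnn_0 \subset \nnn = \nnn_0 \oplus J$, which gives $U(\nnn_0) \subset U(\nnn)$ as subalgebras. Then for any $v \in M$, the subspace $U(\nnn_0)\cdot v$ is a subspace of the $U(\nnn)$-module $U(\nnn)\cdot v$, and the latter is finite dimensional by ($\mathcal{O}'$3). Hence $U(\nnn_0)\cdot v$ is finite dimensional, which is exactly ($\mathcal{O}$3).

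There is no real obstacle in this argument; the whole point is that the definition of $\mathcal{O}'$ is set up so that restriction of scalars along $U(\ggg_0) \hookrightarrow U(\ggg)$ automatically lands in $\mathcal{O}$. If I were worried about anything it would be whether the finite generation needed in ($\mathcal{O}$1) survives restriction, but the preceding proposition handles exactly this point by noting that a finite generating set for the $U(\ggg)$-module $M$ can be enlarged (using a basis of the finite-dimensional $U(\nnn)$-module it generates) to a finite $U(\ggg_0)$-generating set.
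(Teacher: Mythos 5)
Your argument is correct and is exactly what the paper intends: the corollary is stated without proof as an immediate consequence of the preceding proposition (which supplies ($\mathcal{O}$1)), with ($\mathcal{O}$2) and ($\mathcal{O}$4) identical by definition and ($\mathcal{O}$3) following from $U(\nnn_0)\subset U(\nnn)$. Your write-up just makes this routine verification explicit.
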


\begin{prop}
Suppose $M$ is an object of $\mathcal{O'}$, then $M$ is an artinian module.
\end{prop}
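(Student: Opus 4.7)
The plan is to reduce the artinian property for $U(\ggg)$-submodules to the classical artinian property in the ordinary BGG category $\mathcal{O}$. The key observation is that every descending chain of $U(\ggg)$-submodules of $M$ is in particular a descending chain of $U(\ggg_0)$-submodules, so any tool that bounds chains on the $\ggg_0$-side automatically bounds chains on the $\ggg$-side.

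First I would apply the preceding Corollary: since $M\in\mathcal{O}'$, the underlying $U(\ggg_0)$-module structure on $M$ places $M$ inside the classical BGG category $\mathcal{O}$ for $\ggg_0$. Then I would invoke the standard fact (cf. Humphreys, \cite{Hum}) that every object of $\mathcal{O}$ has finite length, and in particular is artinian as a $U(\ggg_0)$-module. The ingredients behind this standard fact are: weight spaces of $M$ are finite dimensional, the set of weights of $M$ is contained in a finite union of sets of the form $\lambda-\mathbb{Z}_{\geq 0}\Delta$, the irreducibles in $\mathcal{O}$ are exhausted by the $L(\lambda)$, and a dimension/character argument then forces a composition series of finite length.

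Finally I would conclude by pulling this back to $\ggg$. Let
\[
M\supseteq M_1\supseteq M_2\supseteq\cdots
\]
be a descending chain of $U(\ggg)$-submodules. Each $M_i$ is a fortiori a $U(\ggg_0)$-submodule of $M$, so by the artinian property for $\mathcal{O}$ the chain stabilizes. Hence $M$ is artinian as a $U(\ggg)$-module, as desired.

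The only conceptual point worth stressing is the passage from $U(\ggg)$-submodules to $U(\ggg_0)$-submodules; once that is observed, the main content lives inside the classical theory and no new argument involving the radical $J$ is needed. Accordingly, the hardest step is merely the citation of the finite-length result for $\mathcal{O}$; everything else is formal.
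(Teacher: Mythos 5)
Your proposal is correct and follows exactly the paper's argument: reduce to the classical BGG category via the preceding Corollary, note that any descending chain of $U(\ggg)$-submodules is a descending chain of $U(\ggg_0)$-submodules, and invoke the artinian property of objects of $\mathcal{O}$. The paper phrases this as a proof by contradiction, but the content is identical.
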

\begin{proof}
Suppose $M$ is not an artinian module , then there is an infinite $U(\ggg)$-module chain $$...\subseteq M_n\subseteq M_{n-1}\subseteq...\subseteq M_0=M.$$
It is also an infinite $U(\ggg_0)$-module chain, but $M$ is an artinian module in the BGG category $\mathcal{O}$, contradictory.
\end{proof}

\begin{prop}
$\mathcal{O'}$ is a noetherian category.
\end{prop}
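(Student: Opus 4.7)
The plan is to mirror the strategy of the preceding proposition: reduce noetherianity in $\mathcal{O}'$ to the already-known noetherianity of the classical BGG category $\mathcal{O}$. The crucial leverage has already been installed by the Corollary above, which states that every $M \in \mathcal{O}'$, viewed by restriction of scalars as a $U(\ggg_0)$-module, is an object of $\mathcal{O}$; moreover $\mathcal{O}$ is known to be noetherian (indeed every object has finite length, cf. \cite{Hum}).

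Concretely, I would argue by contradiction. Fix an object $M$ of $\mathcal{O}'$ and suppose it admits an infinite strictly ascending chain of $U(\ggg)$-submodules
\[
M_0 \subsetneq M_1 \subsetneq M_2 \subsetneq \cdots \subseteq M.
\]
Since $U(\ggg_0) \subseteq U(\ggg)$, each $M_i$ is in particular a $U(\ggg_0)$-submodule, so the displayed chain is also a strictly ascending chain of $U(\ggg_0)$-submodules of $M$. By the Corollary, $M$ lies in $\mathcal{O}$ as a $U(\ggg_0)$-module, which contradicts the noetherianity of $\mathcal{O}$. Hence no such chain exists and $M$ is noetherian, so $\mathcal{O}'$ is a noetherian category.

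I do not anticipate a serious obstacle: the only point that requires comment is that restriction of scalars along the inclusion $U(\ggg_0) \hookrightarrow U(\ggg)$ automatically carries $U(\ggg)$-submodules to $U(\ggg_0)$-submodules, which is immediate from the definitions. One could in principle give a direct proof from axioms ($\mathcal{O}'$2)--($\mathcal{O}'$4) by bounding the set of weights that can appear in a quotient and invoking finite-dimensionality of weight spaces, but that would merely reprove the classical noetherianity of $\mathcal{O}$ and is unnecessary once the Corollary is in place. The entire argument is therefore a verbatim dual of the artinian proof, substituting ascending chains for descending ones.
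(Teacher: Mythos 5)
Your argument is exactly what the paper intends by ``Similar to the proof above'': restrict an ascending chain of $U(\ggg)$-submodules to $U(\ggg_0)$-submodules and invoke the noetherianity of the classical category $\mathcal{O}$ via the earlier Corollary. The proposal is correct and matches the paper's approach.
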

\begin{proof}
Similar to the proof above.
\end{proof}

\section{Verma modules in category $\mathcal{O'}$}
We can view $J$ as a $\ggg_0$-module with adjoint action. Since $\ggg_0$ is a semisimple Lie algebra, then $J$ can be decomposed into a direct sum of a finite number of irreducible submodules
$$J=L(\lambda_1)\oplus L(\lambda_2)\oplus...\oplus L(\lambda_n).$$
we denote $J_1$ to be the direct sum of irreducible submodules of J with $\lambda_i= 0$ ($i\in \{1,2...,n\}$), and denote $J_2$ to be the direct sum of irreducible submodules of J with $\lambda_j\neq 0$ ($j\in \{1,2...,n\}$), then $J=J_1\oplus J_2$.

To define Verma module in category $\mathcal{O'}$, we first consider one-dimensional $U(\bbb)$-module $\bbc w_{(\lambda ,g)}$, $\lambda\in \hhh_0^*$, $g\in J^*$. $h\cdot w_{(\lambda ,g)}=\lambda(h)w_{(\lambda ,g)}$ for any $h\in \hhh_0$. $u\cdot w_{(\lambda ,g)}=g(u)w_{(\lambda ,g)}$ for any $u\in J$. The action of $\nnn_0$ on $w_{(\lambda ,g)}$ is follows by the proposition below.
\begin{prop}
For any $x\in \nnn_0$, $x\cdot w_{(\lambda ,g)}=0$.
\end{prop}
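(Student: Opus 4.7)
The plan is to use the fact that $\bbc w_{(\lambda, g)}$ is one-dimensional, together with the bracket relations between $\hhh_0$ and $\nnn_0$ inside $\bbb$, to force the $\nnn_0$-action to vanish.

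First I would reduce to a weight space calculation. Since $\nnn_0=\bigoplus_{\alpha\in\Phi^+}(\ggg_0)_\alpha$, it suffices to show $x\cdot w_{(\lambda,g)}=0$ for an arbitrary $x\in(\ggg_0)_\alpha$, $\alpha\in\Phi^+$. Because $\bbc w_{(\lambda,g)}$ is one-dimensional, there is a scalar $c(x)\in\bbc$ such that $x\cdot w_{(\lambda,g)}=c(x)\,w_{(\lambda,g)}$. The goal is to show $c(x)=0$.

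Next I would exploit the compatibility of the $U(\bbb)$-module structure with the bracket $[h,x]=\alpha(h)x$ for $h\in\hhh_0$. On the one hand,
\[
(hx-xh)\cdot w_{(\lambda,g)} = h\cdot(c(x)w_{(\lambda,g)}) - x\cdot(\lambda(h)w_{(\lambda,g)}) = \bigl(\lambda(h)c(x)-\lambda(h)c(x)\bigr)w_{(\lambda,g)}=0.
\]
On the other hand, $[h,x]\cdot w_{(\lambda,g)} = \alpha(h)c(x)\,w_{(\lambda,g)}$. Equating these gives $\alpha(h)c(x)=0$ for every $h\in\hhh_0$. Since $\alpha\neq 0$, we can choose $h\in\hhh_0$ with $\alpha(h)\neq 0$, forcing $c(x)=0$.

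I do not expect a genuine obstacle: the argument is essentially the standard observation that any one-dimensional representation of a Lie algebra factors through its abelianization, specialized to the fact that a positive root is nontrivial on $\hhh_0$. The only thing worth double-checking is that this trivial action of $\nnn_0$ is indeed consistent with the prescribed actions of $\hhh_0$ and $J$ (so that $\bbc w_{(\lambda,g)}$ really is a $U(\bbb)$-module); this is immediate because $[\hhh_0,J]\subseteq[\ggg_0,J]$ acts through $g$ while $[\nnn_0,J]$ lies in $\nnn_0\cdot J+J\cdot\nnn_0$, and with $\nnn_0$ acting as zero all the mixed bracket relations are automatically satisfied.
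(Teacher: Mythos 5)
Your proof is correct and follows essentially the same route as the paper: write $x\cdot w_{(\lambda,g)}=c\,w_{(\lambda,g)}$ for a root vector $x\in(\ggg_0)_\alpha$, compare $h\cdot x\cdot w_{(\lambda,g)}$ with $[h,x]\cdot w_{(\lambda,g)}+x\cdot h\cdot w_{(\lambda,g)}$ to get $\alpha(h)c=0$, and pick $h$ with $\alpha(h)\neq 0$. The closing consistency remark is extra but harmless.
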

\begin{proof}
Without loss of generality, we may assume $x\in \nnn_0$ and $[h, x]=\alpha(h)x$ for any $h\in \hhh_0$, $\alpha \in \Phi^+$. If $x\cdot w_{(\lambda,g)}=cw_{(\lambda,g)},c\in \bbc$, then $h\cdot x\cdot w_{(\lambda,g)}=c \lambda(h)w_{(\lambda,g)}=[h,x]\cdot w_{(\lambda,g)}+x\cdot h\cdot w_{(\lambda,g)}=(c\alpha(h)+c \lambda(h))w_{(\lambda,g)}$. So $c\alpha(h)=0$ for any $h\in \hhh_0$. There is a $h_0\in \hhh_0$ such that $\alpha(h_0)\neq 0$. Thus $c$ must be zero. Finally we conclude $x\cdot w_{(\lambda,g)}=0$ for any $x\in \nnn_0$.
\end{proof}

To make one-dimensional space $\bbc w_{(\lambda ,g)}$ to be a $U(\bbb)$-module, there is no restriction to $\lambda\in\hhh_0^*$, but there are some restricted conditions about $g\in J^*$.
\begin{prop}
Suppose $x\in \nnn_0$, $h\in \hhh_0$, $u_1,u_2, u\in J$, then

(1)$g([h,u])=0.$

(2)$g([x,u])=0.$

(3)$g([u_1,u_2])=0.$
\end{prop}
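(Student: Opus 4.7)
The plan is to derive each identity from the requirement that the prescribed action actually extends to a representation of the Lie algebra $\bbb$ on the one-dimensional space $\bbc w_{(\lambda,g)}$; in other words, the bracket relations $[a,b]\cdot w_{(\lambda,g)} = a\cdot b\cdot w_{(\lambda,g)} - b\cdot a\cdot w_{(\lambda,g)}$ must hold for every pair $a,b\in\bbb$. The crucial structural fact is that $J$ is an ideal of $\ggg$ (being the radical), so $[h,u]$, $[x,u]$ all lie in $J$, which means $[h,u]\cdot w_{(\lambda,g)} = g([h,u])w_{(\lambda,g)}$ and likewise for $[x,u]$.

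For part (1), I would compute $h\cdot u\cdot w_{(\lambda,g)} - u\cdot h\cdot w_{(\lambda,g)} = \lambda(h)g(u)w_{(\lambda,g)} - g(u)\lambda(h)w_{(\lambda,g)} = 0$, and compare with $[h,u]\cdot w_{(\lambda,g)} = g([h,u])w_{(\lambda,g)}$ to conclude $g([h,u])=0$. For part (2), the previous proposition gives $x\cdot w_{(\lambda,g)}=0$, hence $x\cdot u\cdot w_{(\lambda,g)} = g(u)\, x\cdot w_{(\lambda,g)} = 0$ and $u\cdot x\cdot w_{(\lambda,g)} = 0$; equating the difference with $[x,u]\cdot w_{(\lambda,g)} = g([x,u])w_{(\lambda,g)}$ yields $g([x,u])=0$.

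Part (3) is essentially free of work: by the standing hypothesis that $\ggg$ is generalised reductive we have $[J,J]=0$, so $[u_1,u_2]=0$ and the equality $g([u_1,u_2])=0$ is automatic. In fact one could derive it from the module compatibility as well, but the direct appeal to $[J,J]=0$ is cleaner.

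The argument contains no real obstacle; the only thing to be careful about is recognising that $[h,u]$ and $[x,u]$ lie in $J$ rather than in $\hhh_0$ or $\nnn_0$, so that the action on $w_{(\lambda,g)}$ is given via $g$ and not via $\lambda$ or the zero map of Proposition 3.1. Once this is observed, each item follows from a single line of bracket computation.
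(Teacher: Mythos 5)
Your proof is correct and follows essentially the same route as the paper: each identity is forced by requiring the bracket relation $[a,b]\cdot w_{(\lambda,g)}=a\cdot b\cdot w_{(\lambda,g)}-b\cdot a\cdot w_{(\lambda,g)}$ to hold on the one-dimensional module, using that $J$ is an ideal so the brackets land in $J$ and act via $g$. The only (immaterial) difference is in part (3), where you invoke the standing hypothesis $[J,J]=0$ directly instead of running the same commutator computation as the paper, which in any case yields the identical conclusion.
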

\begin{proof}
(1)$h\cdot u\cdot w_{(\lambda,g)}=\lambda(h)g(u)w_{(\lambda,g)}$, $[h, u]\cdot w_{(\lambda,g)}+u\cdot h\cdot w_{(\lambda,g)}=g([h, u])w_{(\lambda,g)}+\lambda(h)g(u)w_{(\lambda,g)}$. Then we deduce that $g([h, u])=0$.

(2)$x\cdot u\cdot w_{(\lambda,g)}=0$. Since $[x, u]\in J$, $[x, u]\cdot w_{(\lambda,g)}+u\cdot x\cdot w_{(\lambda,g)}=g([x, u])w_{(\lambda,g)}$. We deduce that $g([e, u])=0$.

(3)$u_1\cdot u_2\cdot w_{(\lambda,g)}= g(u_1)g(u_2)w_{(\lambda,g)}$, $[u_1, u_2]\cdot w_{(\lambda,g)}+u_1\cdot u_2\cdot w_{(\lambda,g)}=g([u_1, u_2])w_{(\lambda,g)}+g(u_1)g(u_2)w_{(\lambda,g)}$. Then we deduce that $g([u_1, u_2])=0$.
\end{proof}

\begin{prop}
Suppose $u\in J_2$, then $g(u)=0$.
\end{prop}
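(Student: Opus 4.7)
The plan is to reduce the statement to a structural fact about non-trivial irreducible $\ggg_0$-modules and then invoke Proposition 3.2. Decomposing $J_2$ as an $\hhh_0$-weight module and using linearity, it suffices to treat a weight vector $u \in (J_2)_\mu$. If $\mu \neq 0$, pick $h \in \hhh_0$ with $\mu(h) \neq 0$; then $u = \mu(h)^{-1}[h,u]$, and part (1) of Proposition 3.2 forces $g(u) = 0$. So only the case $u \in (J_2)_0$ requires attention.

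For a weight-zero vector $u \in (J_2)_0$, I would show that $u \in [\nnn_0, J]$, after which part (2) of Proposition 3.2 concludes. Decompose $J_2 = \bigoplus_j L(\lambda_j)$ into its non-trivial irreducible $\ggg_0$-summands (so each $\lambda_j \neq 0$), and work in one summand $L = L(\lambda)$ at a time; it is enough to prove $L_0 \subseteq \nnn_0 \cdot L$, where the dot denotes the adjoint action of $\ggg_0$ on $J$. Because $L$ is non-trivial irreducible, $\ggg_0 \cdot L$ is a nonzero $\ggg_0$-submodule and therefore coincides with $L$. Combined with the Cartan decomposition $\ggg_0 = \nnn_0^- \oplus \hhh_0 \oplus \nnn_0$ and the vanishing $(\hhh_0 \cdot L)_0 = 0$, this yields
$$L_0 = (\nnn_0^- \cdot L)_0 + (\nnn_0 \cdot L)_0,$$
so the whole proof further reduces to the inclusion $(\nnn_0^- \cdot L)_0 \subseteq \nnn_0 \cdot L$.

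This last inclusion is the main obstacle, and I would handle it via the representation theory of the $\sss\lll_2$-triple $\{e_\alpha, h_\alpha, f_\alpha\}$ attached to each $\alpha \in \Phi^+$. Fix such $\alpha$ and $v \in L_\alpha$, and decompose $L$ into irreducibles for this $\sss\lll_2$. Every nonzero $\sss\lll_2$-component of $v$ has $h_\alpha$-eigenvalue $\alpha(h_\alpha) = 2$ and hence sits in an irreducible of dimension at least three; inside any such irreducible, a direct computation in a standard basis shows that the $h_\alpha$-weight-zero line lies in $e_\alpha$ applied to the weight $-2$ subspace. Summing the components gives $f_\alpha v \in e_\alpha \cdot L_{-\alpha} \subseteq \nnn_0 \cdot L$, and varying $\alpha \in \Phi^+$ yields $(\nnn_0^- \cdot L)_0 \subseteq \nnn_0 \cdot L$ as required. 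Consequently every $u \in (J_2)_0$ admits an expression $u = \sum_k [e_k, u_k']$ with $e_k \in \nnn_0$ and $u_k' \in J$, and Proposition 3.2(2) completes the proof.
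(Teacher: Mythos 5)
Your proof is correct, but the way you handle the weight-zero case differs from the paper's. Both arguments agree on the easy half (for $u\in (J_2)_\mu$ with $\mu\neq 0$, write $u=\mu(h)^{-1}[h,u]$ and apply $g([h,u])=0$), and both ultimately rest on the same key inclusion $L(\lambda)_0\subseteq [\nnn_0,L(\lambda)]$ for a non-trivial irreducible summand; the difference is how that inclusion is obtained. The paper takes the lowest weight vector $u_0$ of $L(\lambda)$, notes that $L(\lambda)=U(\nnn_0)\cdot u_0$ under the adjoint action, and observes that since the lowest weight $w_0\lambda\neq 0$, reaching weight zero requires at least one raising operator, so every weight-zero element is a sum of terms $[e,u']$ with $e\in\nnn_0$, $u'\in J$; then $g([e,u'])=0$ finishes. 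You instead write $L=\ggg_0\cdot L$, strip off the $\hhh_0$- and $\nnn_0$-parts, and convert each $[f_\alpha,L_\alpha]$-contribution into $[e_\alpha,L_{-\alpha}]$ by an $\mathfrak{sl}_2$-computation (the weight-$2$ hypothesis forcing each component into an irreducible of dimension at least three, where $e_\alpha$ surjects onto the zero-weight line). Your route is somewhat longer and invokes $\mathfrak{sl}_2$-theory where the paper only needs the generation of a finite-dimensional irreducible module by its lowest weight vector, but it is sound, and arguably makes the structural reason for the vanishing (namely $L_0\subseteq[\nnn_0,L]$) more explicit than the paper's rather terse rewriting of $[e_1^{r_1}\cdots e_l^{r_l},u_0]$ as $[e,u']$.
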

\begin{proof}
Without loss of generality, we may assume $J_2=L(\lambda)$, $\lambda\neq 0$. We divided $L(\lambda)$ as a direct sum of wight spaces, $L(\lambda)=\bigoplus\limits_{\mu} L(\lambda)_{\mu}$. If $u\in L(\lambda)_{\mu}$, and $\mu \neq 0$, then there is a $h\in \hhh$ makes $\mu(h)\neq 0$ and $g([h, u])=\mu(h)g(u)=0$, it follows that g(u)=0. If $\mu= 0$, then $u\in L(\lambda)_0$. Suppose $u_0$ is a lowest weight vector of $L(\lambda)$, that is, $[\nnn_0^-,u_0]=0$. Then $u$ can be written as $[ \sum\limits_{r_1,...,r_l} k_{r_1,...,r_l} e_1^{r_1}e_2^{r_2}...e_l^{r_l}, u_0]$, $e_1,...,e_l\in\nnn_0, r_1,...,r_l\in\bbz^+, k_{r_1,...,r_l}\in\bbc$. Actually, $[e_1^{r_1}e_2^{r_2}...e_l^{r_l}, u_0]$ can be written as $[e, u']$ for some $e\in\nnn_0$ and for some $u'\in L(\lambda)_{\mu}$, $\mu \neq 0$. Since $g([e, u'])=0$, we get that $g(u)=0$. It follows that for any $u\in J_2$, $g(u)=0$.
\end{proof}

\begin{prop}
Suppose $u\in J_1$, $x\in \ggg_0$, then $[x,u]=0$.
\end{prop}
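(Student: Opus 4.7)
The plan is to observe that the statement is essentially a direct consequence of the definition of $J_1$. Recall that $J$ is being viewed as a $\ggg_0$-module under the adjoint action, and $J_1$ was defined to be the sum of those irreducible components $L(\lambda_i)$ of $J$ for which $\lambda_i = 0$. Since $\ggg_0$ is semisimple, the irreducible highest weight module $L(0)$ is just the one-dimensional trivial $\ggg_0$-module, on which every element of $\ggg_0$ acts as zero.

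First I would write $J_1 = \bigoplus_k L(0)$ (a finite sum of trivial components) according to the decomposition given just above the Verma module construction. Then I would pick an arbitrary $u \in J_1$, decompose it along these summands, and note that on each copy of $L(0)$ the adjoint action of $\ggg_0$ is identically zero. Since the action of $\ggg_0$ on $J \subset \ggg$ is by definition $x \cdot u = \ad(x)(u) = [x,u]$, we get $[x,u] = 0$ for every $x \in \ggg_0$ and every $u \in J_1$, which is exactly the claim.

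There is essentially no obstacle here: the only thing that needs to be checked is the (standard) fact that the irreducible highest weight module $L(0)$ over a semisimple $\ggg_0$ is one-dimensional with trivial action, which is immediate from the PBW/highest weight construction since $L(0) = U(\ggg_0) \cdot v_0$ with $\hhh_0 \cdot v_0 = 0$, $\nnn_0 \cdot v_0 = 0$, and with the further generation by $\nnn_0^-$ collapsing to $\bbc v_0$ because otherwise one would produce a nonzero finite-dimensional submodule with a non-dominant lowest weight, contradicting finite-dimensionality. Once that is acknowledged, the proposition follows in one line by linearity on the summands of $J_1$.
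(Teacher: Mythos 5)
Your proposal is correct and follows essentially the same route as the paper: both reduce to the observation that each summand $L(0)$ of $J_1$ is the one-dimensional trivial $\ggg_0$-module, so the adjoint action of every $x\in\ggg_0$ on it vanishes. The paper merely spells out the triviality of the action by a short weight computation on the one-dimensional space, which is the same standard fact you invoke.
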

\begin{proof}
Without loss of generality, we may assume $J_1=L(0)$, dim$L(0)$=1. It is a 1 dimensional $\ggg_0$-module with adjoint action. If $x\in \hhh_0$, $[x,u]=x\cdot u=0$. If $x\in \nnn_0\oplus\nnn_0^-$, $h\cdot x=\alpha(h)x$ for any $h\in \hhh_0$, $\alpha \in \Phi$. Since dim $L(0)$=1, $x\cdot u=cu, c\in \bbc$. Then $h\cdot x\cdot u=0=[h,x]\cdot u+x\cdot h\cdot u=c\alpha(h)$. Thus for any $h\in \hhh_0$, $c\alpha(h)=0$. We can find a $h_0\in \hhh_0$ such that $\alpha(h_0)\neq 0$. It follows that $c=0$. Thus $[J_1,\ggg_0]=0$.
\end{proof}

We define a set $G=\{g\in J^*\mid g(J_2)=0\}.$

\begin{prop}
An one-dimensional $U(\bbb)$-module $\bbc_{(\lambda ,g)}$ is determined by a pair of functions $\lambda\in\hhh_0^*$ and $g\in G$. Conversely, suppose an one-dimensional $U(\bbb)$-module $\bbc_{(\lambda ,g)}$ is determined by a pair of functions $\lambda\in\hhh_0^*$ and $g\in J^*$, then $g\in G$.
\end{prop}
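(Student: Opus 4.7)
The plan is to treat the two directions separately. The converse is essentially a direct appeal to Proposition 3.3: if $\bbc w_{(\lambda ,g)}$ already carries a $U(\bbb)$-module structure whose $\hhh_0$- and $J$-actions are encoded by $\lambda$ and $g$, then Proposition 3.3 forces $g$ to annihilate every element of $J_2$, so $g\in G$ by definition of $G$.

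For the forward direction I would perform an explicit construction and verification. Given $\lambda\in\hhh_0^*$ and $g\in G$, take a one-dimensional vector space $\bbc w_{(\lambda,g)}$ and declare $h\cdot w_{(\lambda,g)}=\lambda(h)w_{(\lambda,g)}$ for $h\in\hhh_0$, $x\cdot w_{(\lambda,g)}=0$ for $x\in\nnn_0$, and $u\cdot w_{(\lambda,g)}=g(u)w_{(\lambda,g)}$ for $u\in J$, then extend linearly to all of $\bbb=\hhh_0\oplus\nnn_0\oplus J$. To see this defines a $\bbb$-module one must verify the compatibility $[x,y]\cdot w_{(\lambda,g)}=x\cdot y\cdot w_{(\lambda,g)}-y\cdot x\cdot w_{(\lambda,g)}$ for all pairs $x,y$ drawn from the three summands.

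Most of the cases are routine. When both arguments lie in $\bbb_0$, the identity reduces to the standard one-dimensional $\bbb_0$-module calculation, and when both arguments lie in $J$ it is automatic from $[J,J]=0$, since in each case the right-hand side collapses because scalars commute. The substantive case, and the only real obstacle, is the mixed case $x\in\bbb_0$, $u\in J$: the right-hand side evaluates to $0$, so the identity reduces to the assertion $g([x,u])=0$. To establish this I would decompose $u=u_1+u_2$ along $J=J_1\oplus J_2$; Proposition 3.4 yields $[x,u_1]=0$, while $[x,u_2]\in J_2$ because $J_2$ is $\ggg_0$-stable by construction. Since $g\in G$ vanishes identically on $J_2$, we conclude $g([x,u])=g([x,u_2])=0$, which finishes the bracket verification and therefore the construction.
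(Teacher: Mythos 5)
Your proof is correct and follows essentially the same route as the paper: both reduce the forward direction to verifying $[x,y]\cdot w_{(\lambda,g)}=x\cdot y\cdot w_{(\lambda,g)}-y\cdot x\cdot w_{(\lambda,g)}$ on the summands of $\bbb$, with the only nontrivial point being $g([\bbb_0,J])=0$, settled exactly as you do via $[\bbb_0,J_1]=0$, $[\bbb_0,J_2]\subseteq J_2$ and $g(J_2)=0$; the converse is likewise dispatched by citing the earlier propositions. The only difference is organizational — the paper runs one computation with each element decomposed as a four-term sum, while you argue case by case by summand — which changes nothing of substance.
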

\begin{proof}
The second assertion is already proved, we now prove the first assertion. We already know $[J, J]=0, [\bbb_0, J_1]=0, [\bbb_0, J_2]\subset J_2, g\in G$. Suppose $x, y\in \bbb$, $x=a_1+b_1+c_1+d_1, y=a_2+b_2+c_2+d_2$, $a_1, a_2\in\hhh_0$, $b_1, b_2\in\bbb_0$, $c_1, c_2\in J_1$, $d_1, d_2\in J_2$, Ôò
\begin{eqnarray*}
x\cdot y\cdot w_{(\lambda ,g)}-y\cdot x\cdot w_{(\lambda ,g)}
&=&(a_1+b_1+c_1+d_1)\cdot (a_2+b_2+c_2+d_2)\cdot w_{(\lambda ,g)}\\
&&-(a_2+b_2+c_2+d_2)\cdot (a_1+b_1+c_1+d_1)\cdot w_{(\lambda ,g)}\\
&=&(\lambda(a_1)+g(c_1))(\lambda(a_2)+g(c_2))w_{(\lambda ,g)}\\
&&-(\lambda(a_2)+g(c_2))(\lambda(a_1)+g(c_1))w_{(\lambda ,g)}\\
&=&0.
\end{eqnarray*}
$$[x, y]\cdot w_{(\lambda ,g)}=[a_1+b_1+c_1+d_1, a_2+b_2+c_2+d_2]\cdot w_{(\lambda ,g)}=[c_1, c_2]\cdot w_{(\lambda ,g)}=0.$$
So $x\cdot y\cdot w_{(\lambda ,g)}-y\cdot x\cdot w_{(\lambda ,g)}=[x, y]\cdot w_{(\lambda ,g)}$. Then $\bbc w_{(\lambda ,g)}$ becomes a $U(\bbb)$-module.
\end{proof}

\begin{defn}
We define Verma module in category $\mathcal{O'}$£º
$$M(\lambda,g)=U(\ggg)\otimes_{U(\bbb)}{\bbc_{(\lambda ,g)}}.$$
$w_{(\lambda ,g)}$ is a nonzero element of one-dimensional $U(\bbb)$-module $\bbc_{(\lambda ,g)}$, and it satisfies: $h\cdot w_{(\lambda ,g)}= \lambda(h)w_{(\lambda ,g)}$ for any $h\in \hhh_0$. For any $u\in J$, $u\cdot w_{(\lambda ,g)}=g(u)w_{(\lambda ,g)}$. $\lambda \in \hhh_0^*, g \in G$.
\end{defn}

\section{The properties of category $\mathcal{O'}$}
\begin{defn}
Suppose $M\in\mathcal{O'}$, $v^+\in M$ and $\nnn_0\cdot v^+=0$. For any $h\in \hhh_0$, $h\cdot v^+=\lambda(h)v^+$, $\lambda \in \hhh_0^*$. For any $u\in J$, $u\cdot v^+=g(u)v^+$, $g \in G$. We say $v^+$ is a maximal vector of $M$ with highest weight $(\lambda, g)$. If $M$ is generated by $v^+$, we say $M$ is a highest weight module.
\end{defn}

\begin{rem}
The weights, weight modules and weight spaces mentioned in this paper are in the sense of semisimple Lie algebra. It is because they are based on the semisimple action of $\hhh_0$. The highest weights and highest weight modules mentioned in this paper are in the sense of generalised reductive Lie algebra, they are also highest weights and highest weight modules in the sense of semisimple Lie algebra.
\end{rem}

\begin{prop}
Suppose $M\in\mathcal{O'}$, and suppose there is a maximal vector in $M$ with wight $(\lambda, g)$. There is a $U(\ggg)$-module homomorphism from $M(\lambda ,g)$ to $M$.
\end{prop}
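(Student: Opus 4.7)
The plan is to invoke the universal property of the induced module $M(\lambda,g)=U(\ggg)\otimes_{U(\bbb)}\bbc_{(\lambda,g)}$. By tensor-hom adjunction, to produce a $U(\ggg)$-module map $\phi\colon M(\lambda,g)\to M$ it suffices to produce a $U(\bbb)$-module map $\psi\colon \bbc_{(\lambda,g)}\to M$, where $M$ is regarded as a $U(\bbb)$-module by restriction.

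First I would define $\psi$ on the generator by $\psi(w_{(\lambda,g)})=v^+$ and extend $\bbc$-linearly. Then the verification that $\psi$ is $U(\bbb)$-linear breaks into three elementary checks, one for each summand of the decomposition $\bbb=\hhh_0\oplus\nnn_0\oplus J$: for $h\in\hhh_0$ one matches $\psi(h\cdot w_{(\lambda,g)})=\lambda(h)v^+$ against $h\cdot v^+=\lambda(h)v^+$; for $x\in\nnn_0$ one matches $\psi(x\cdot w_{(\lambda,g)})=0$ against $x\cdot v^+=0$, which holds exactly because $v^+$ is a maximal vector; and for $u\in J$ one matches $\psi(u\cdot w_{(\lambda,g)})=g(u)v^+$ against $u\cdot v^+=g(u)v^+$, which holds by the definition of the highest weight $(\lambda,g)$ in Definition 4.1. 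The compatibility of these actions with the Lie brackets (and hence with the associative multiplication in $U(\bbb)$) follows from the same computation already carried out in Proposition 3.5, since in both $\bbc_{(\lambda,g)}$ and on $v^+$ the $\bbb$-action is the one-dimensional representation determined by $(\lambda,g)$; in particular the condition $g\in G$ is what makes this well-defined on $M$ as well.

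Once $\psi$ is shown to be a $U(\bbb)$-homomorphism, the universal property of induction produces a unique $U(\ggg)$-homomorphism
\[
\phi\colon M(\lambda,g)\longrightarrow M,\qquad \phi(u\otimes w_{(\lambda,g)})=u\cdot v^+\quad(u\in U(\ggg)).
\]
This is exactly the desired map.

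The main (very mild) obstacle is just ensuring that the relations characterising a maximal vector of weight $(\lambda,g)$ in Definition 4.1 are literally the relations defining the one-dimensional $U(\bbb)$-module $\bbc_{(\lambda,g)}$ of Proposition 3.5; there is nothing deeper to check, and no convergence or finiteness conditions enter because everything takes place on a single generator.
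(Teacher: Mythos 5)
Your proposal is correct, and it takes a genuinely different route from the paper. The paper argues concretely: it uses the PBW theorem to write every element of $M(\lambda,g)$ uniquely as $u\cdot w$ with $u\in U(\nnn_0^-)$, defines $\Phi(u\cdot w)=u\cdot w_{(\lambda,g)}$, and then checks $\Phi(xu\cdot w)=x\Phi(u\cdot w)$ by expanding $xu$ in a PBW basis $\sum_i a_ib_ic_iu_i$ and pushing the $U(\bbb)$-part onto the highest weight line. You instead invoke the adjunction $\Hom_{U(\ggg)}\bigl(U(\ggg)\otimes_{U(\bbb)}\bbc_{(\lambda,g)},\,M\bigr)\cong\Hom_{U(\bbb)}\bigl(\bbc_{(\lambda,g)},\,M\bigr)$, so the whole proof reduces to the observation that $\bbc v^+$ is a $U(\bbb)$-submodule of $M$ isomorphic to $\bbc_{(\lambda,g)}$ --- which is word-for-word the definition of a maximal vector of weight $(\lambda,g)$. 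Your approach buys two things: it sidesteps the well-definedness question implicit in the paper's formula (which silently uses freeness of $M(\lambda,g)$ over $U(\nnn_0^-)$), and it avoids the somewhat loose coefficient bookkeeping in the paper's PBW computation, where the scalar $\xi$ should really carry an index $i$. What the paper's explicit version buys in exchange is a concrete description of the image as $U(\nnn_0^-)\cdot w_{(\lambda,g)}$, which is occasionally convenient later. Both arguments are valid; yours is the more standard and more robust one.
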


\begin{proof}
Suppose $w_{(\lambda ,g)}$ is the maximal vector of $M$ with weight $(\lambda, g)$. According to  the definition of Verma module, we can find a maximal vector $w$ of $M(\lambda,g)$, and the elements of $M(\lambda,g)$ can be expressed uniquely as $u\cdot w$, $u\in U(\nnn_0^-)$. We define a linear map
$$\Phi : M(\lambda,g)\rightarrow M.$$
$\Phi(u.w)=u\cdot w_{(\lambda ,g)}$, $u\in U(\nnn_0^-)$. We need to show $\Phi$ is a $U(\ggg)$-module homomorphism. That is to proof for any $x\in U(\ggg)$, $\Phi(xu\cdot w)=x\Phi(u\cdot w)$. According to PBW basis theorem, we write $xu=\sum\limits_i a_i b_i c_i u_i$, $a_i\in U(\nnn_0^-)$, $b_i\in U(\hhh_0)$, $c_i\in U(\nnn_0)$, $u_i\in U(J)$. $b_i c_i u_i\cdot w=\xi w$, $\xi\in \bbc$, $$\Phi(xu\cdot w)=\Phi(\sum\limits_i \xi a_i w)=\sum\limits_i \xi a_i u\cdot w_{(\lambda ,g)}.$$
$$x\Phi(u\cdot w)=xu\cdot w_{(\lambda ,g)}=\sum\limits_i \xi a_i u\cdot w_{(\lambda ,g)}.$$
Hence $\Phi(xu\cdot w)=x\Phi(u\cdot w)$.
\end{proof}

\begin{prop}
Suppose $M\in\mathcal{O'}$, then the submodules of $M$ are weight modules , that is $\hhh_0$-semisimple.
\end{prop}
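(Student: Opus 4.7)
The plan is to give the standard Vandermonde argument, which works here because the only structural fact needed is that $\hhh_0$ acts on $M$ and the submodule is stable under that action. Since we are dealing with $U(\ggg)$-submodules and $\hhh_0 \subset \ggg$, stability is automatic.

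First I would fix a $U(\ggg)$-submodule $N \subseteq M$ and a nonzero vector $v \in N$. Using property $(\mathcal{O}'2)$ for $M$, I write
\[
v = v_{\mu_1} + v_{\mu_2} + \cdots + v_{\mu_k},
\]
where $\mu_1, \ldots, \mu_k \in \hhh_0^*$ are pairwise distinct and each $v_{\mu_i} \in M_{\mu_i}$ is nonzero. The goal reduces to showing $v_{\mu_i} \in N$ for every $i$, since then $N = \bigoplus_{\mu} (N \cap M_\mu)$.

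Next I would choose $h \in \hhh_0$ with $\mu_1(h), \ldots, \mu_k(h)$ pairwise distinct; this is possible because, for $i \neq j$, the condition $\mu_i(h) = \mu_j(h)$ cuts out a proper hyperplane in $\hhh_0$, and a finite union of proper hyperplanes cannot exhaust $\hhh_0$. Since $N$ is a $U(\ggg)$-submodule, the vectors
\[
h^{j} \cdot v = \sum_{i=1}^{k} \mu_i(h)^{j}\, v_{\mu_i}, \qquad j = 0, 1, \ldots, k-1,
\]
all lie in $N$. The coefficient matrix $(\mu_i(h)^{j})_{0 \leq j \leq k-1,\, 1 \leq i \leq k}$ is a Vandermonde matrix with distinct entries, hence invertible over $\bbc$. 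Inverting the system expresses each $v_{\mu_i}$ as a $\bbc$-linear combination of the $h^{j} \cdot v$, so $v_{\mu_i} \in N$.

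Therefore every $v \in N$ decomposes as a sum of weight vectors inside $N$, which gives $N = \bigoplus_{\mu \in \hhh_0^*} (N \cap M_\mu)$, proving $N$ is $\hhh_0$-semisimple. I do not anticipate any real obstacle: the argument is purely linear-algebraic and the only subtlety is selecting $h \in \hhh_0$ separating the finitely many weights appearing in $v$, which the preceding hyperplane observation handles.
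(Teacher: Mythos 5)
Your proof is correct and is essentially the same as the paper's: both fix a separating $h\in\hhh_0$ and use pure linear algebra on the action of $h$ to extract each weight component of a vector in the submodule. The only cosmetic difference is that the paper applies the single operator $\prod_{j\neq i}(h-\mu_j(h))$ to $v$ (a Lagrange-type projector), whereas you invert the Vandermonde system formed by $h^0\cdot v,\dots,h^{k-1}\cdot v$ — the same computation packaged differently.
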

\begin{proof}
Refer to the proof of theorem 10.9 in \cite{Car}. We know that $M=\bigoplus\limits_{\mu_i\in{\hhh_0^*}}M_{\mu_i}$, $i$ belongs to an index set $I$. Suppose $N$ is a submodule of $M$, $v\in N$, $v=\sum\limits_{\mu_i\in{\hhh_0^*}} v_{\mu_i}$, $v_{\mu_i}\in M_{\mu_i}$, there are only a finite number of sums that are not zero. We only need to prove every $v_{\mu_i}\in N$.
$$\prod\limits_{j\neq i}(h-\mu_j(h))v=\prod\limits_{j\neq i}(h-\mu_j(h))v_{\mu_i}=\prod\limits_{j\neq i}((\mu_i(h)-\mu_j(h))v_{\mu_i}, h\in \hhh_0.$$
We can find a $h\in \hhh$ such that $\mu_i(h)\neq \mu_j(h)$ for any $j\neq i$. For this $h\in \hhh$ we can get
$$\prod\limits_{j\neq i}((\mu_i(h)-\mu_j(h))v_{\mu_i}\in N.$$
Hence $v_{\mu_i}\in N$, and $N=\bigoplus\limits_{\mu_i\in{\hhh_0^*}}(M_{\mu_i}\bigcap N)$.
\end{proof}

\begin{prop}
$M(\lambda, g)$ has a unique maximal submodule.
\end{prop}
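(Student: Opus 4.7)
The plan is to mimic the classical argument for Verma modules over semisimple Lie algebras, the only adjustments being notational (carrying the extra datum $g$ on $J$) and using the previous proposition on $\hhh_0$-semisimplicity of submodules.

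First I would unpack the weight structure of $M(\lambda,g)$. By PBW applied to $\ggg = \nnn_0^- \oplus \hhh_0 \oplus \nnn_0 \oplus J$ and the fact that $\bbb = \hhh_0 \oplus \nnn_0 \oplus J$ acts on $w_{(\lambda,g)}$ by scalars (with $\nnn_0$ acting by zero), every element of $M(\lambda,g) = U(\ggg)\otimes_{U(\bbb)} \bbc_{(\lambda,g)}$ is of the form $u \cdot w_{(\lambda,g)}$ with $u \in U(\nnn_0^-)$. Consequently the weights of $M(\lambda,g)$ relative to $\hhh_0$ all lie in $\lambda - \bbz_{\geq 0}\Delta$, and the weight space $M(\lambda,g)_\lambda$ is exactly $\bbc w_{(\lambda,g)}$, one dimensional. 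Note also that $J$ still acts on every weight vector by the scalars determined by $g$, since $J$ commutes with $\hhh_0$ modulo kernels of $g$ and $[J,J]=0$, so the $g$-action is preserved under the weight decomposition.

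Next, by the previous proposition, every submodule $N \subseteq M(\lambda,g)$ is $\hhh_0$-semisimple, hence decomposes as $N = \bigoplus_\mu (N \cap M(\lambda,g)_\mu)$. The key observation is then: if $N$ is a proper submodule, then $w_{(\lambda,g)} \notin N$; for otherwise $N \supseteq U(\ggg)\cdot w_{(\lambda,g)} = M(\lambda,g)$. Since $M(\lambda,g)_\lambda = \bbc w_{(\lambda,g)}$ is one dimensional, this forces $N \cap M(\lambda,g)_\lambda = 0$ for every proper submodule $N$.

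Now let $N^+$ be the sum of all proper submodules of $M(\lambda,g)$. Using the weight decomposition again, $N^+ = \bigoplus_\mu \bigl( \sum_N (N \cap M(\lambda,g)_\mu) \bigr)$, and in particular $N^+ \cap M(\lambda,g)_\lambda = 0$. Hence $N^+$ does not contain $w_{(\lambda,g)}$ and is itself a proper submodule, which by construction contains every proper submodule. This $N^+$ is the unique maximal submodule of $M(\lambda,g)$.

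I do not expect a genuine obstacle here; the only point one must be a little careful about is verifying that the highest weight space remains one-dimensional in the presence of $J$, which follows because $J \subset \bbb$ acts by scalars on $w_{(\lambda,g)}$ and so contributes nothing new beyond $U(\nnn_0^-)\cdot w_{(\lambda,g)}$ in the PBW expansion.
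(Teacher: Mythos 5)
Your proof is correct and follows essentially the same route as the paper: both use the $\hhh_0$-semisimplicity of submodules, the one-dimensionality of the $\lambda$-weight space that generates $M(\lambda,g)$, and the sum of all proper submodules as the unique maximal one. Your version simply spells out the PBW and $J$-action details that the paper leaves implicit.
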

\begin{proof}

Each proper submodule of $M(\lambda, g)$ is a weight module. It cannot have $\lambda$ as a wight, since the one-dimensional space $M(\lambda, g)_\lambda$ generates $M(\lambda, g)$, and the sum of all proper submodule is still proper. Thus $M(\lambda, g)$ has a unique maximal submodule.

\end{proof}

\begin{defn}
Suppose $N$ is the maximal submodule of Verma module $M(\lambda, g)$, We define  $L(\lambda, g)$ as $M(\lambda, g)/N$.
\end{defn}
\begin{prop}
Suppose $M$ is an irreducible module in $\mathcal{O'}$, then $M$ is isomorphic to some $L(\lambda, g)$, $\lambda\in\hhh_0^*, g\in G$.
\end{prop}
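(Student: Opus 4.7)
The plan is to extract a maximal vector $v^+\in M$ of weight $(\lambda,g)$ with $\lambda\in\hhh_0^*$ and $g\in G$, and then invoke the Verma-module universal property together with uniqueness of the maximal submodule of $M(\lambda,g)$.

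First, I would pick a nonzero $\hhh_0$-weight vector $v\in M_\mu$, which exists by $(\mathcal{O'}2)$, and set $W:=U(\nnn)\cdot v$. Since $v$ is a weight vector and $\hhh_0$ normalises $\nnn$, $W=U(\bbb)\cdot v$ is $\bbb$-stable; by $(\mathcal{O'}3)$ it is finite-dimensional. Now $\bbb=\bbb_0\oplus J$ is solvable: $J$ is an abelian ideal (because $[J,J]=0$ and $J$ is an ideal of $\ggg$), and $\bbb/J\cong\bbb_0$ is the Borel of the semisimple algebra $\ggg_0$. Applying Lie's theorem to the finite-dimensional $\bbb$-module $W$ over $\bbc$ yields a simultaneous eigenvector $v^+\in W$ for $\bbb$: there exist $\lambda\in\hhh_0^*$, $g\in J^*$, and a linear $\xi\colon\nnn_0\to\bbc$ with $h\cdot v^+=\lambda(h)v^+$, $u\cdot v^+=g(u)v^+$ and $x\cdot v^+=\xi(x)v^+$ for $h\in\hhh_0$, $u\in J$, $x\in\nnn_0$. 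Repeating the positive-root weight argument from the proof of Proposition 3.1 verbatim then forces $\xi\equiv 0$, so $\nnn_0\cdot v^+=0$. The one-dimensional $U(\bbb)$-module $\bbc v^+$ is therefore of the form $\bbc_{(\lambda,g)}$, and the proposition characterising one-dimensional $U(\bbb)$-modules guarantees $g\in G$. Hence $v^+$ is a maximal vector of $M$ with highest weight $(\lambda,g)$ in the sense of Definition 4.1.

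With this $v^+$ in hand, the proposition providing a homomorphism from a Verma module yields a $U(\ggg)$-map $\Phi\colon M(\lambda,g)\to M$ sending $w_{(\lambda,g)}$ to $v^+$. Its image is a nonzero submodule of the irreducible $M$, so $\Phi$ is surjective. The kernel is a proper submodule of $M(\lambda,g)$, hence is contained in the unique maximal submodule $N$ produced in the previous proposition; since $M\cong M(\lambda,g)/\ker\Phi$ is irreducible, $\ker\Phi=N$, and therefore $M\cong L(\lambda,g)$.

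The crux is the first step: extracting a single $v^+$ that is simultaneously an $\hhh_0$-weight vector, annihilated by $\nnn_0$, and a common eigenvector for the whole of $J$. The subtlety is that $J_2$ need not preserve any fixed $\nnn_0$-annihilator or weight space, so one cannot simply take a maximal vector of the underlying $\ggg_0$-module (which lies in category $\mathcal{O}$ by the earlier Corollary) and hope that $J$ acts on it by scalars. The remedy is to perform the search inside the finite-dimensional solvable $\bbb$-module $W=U(\nnn)\cdot v$ and let Lie's theorem handle all three conditions at once; the vanishing of $\nnn_0$ and the constraint $g\in G$ are then automatic consequences of the previously established propositions.
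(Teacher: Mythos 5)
Your proposal is correct and follows essentially the same route as the paper: generate a finite-dimensional $U(\nnn)$-module from a vector of $M$, use solvability (Lie's theorem) to extract a common $\bbb$-eigenvector $v^+$ with $\nnn_0\cdot v^+=0$ and $g\in G$, and then map $M(\lambda,g)$ onto the irreducible $M$ to identify it with $L(\lambda,g)$. If anything, your version is slightly more careful than the paper's, since you start from an $\hhh_0$-weight vector to ensure $U(\nnn)\cdot v$ is genuinely $\bbb$-stable, whereas the paper starts from an arbitrary vector and also asserts $\dim V=1$, which is unnecessary.
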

\begin{proof}
Choose a nonzero vector $v^+$ of $M$. Then $v^+$ generates a finite dimensional $U(\nnn)$-module $V$. Since $\nnn$ is solvable, we can find a 1 dimensional submodule of V, choose a nonzero vector $w_{(\lambda,g)}$ $(\lambda \in \hhh_0^*, g \in G)$ of this submodule, $h.w_{(\lambda,g)}=\lambda(h)w_{(\lambda,g)}$ for any $h\in \hhh_0$, $u.w_{(\lambda,g)}=g(u)w_{(\lambda,g)}$ for any $u\in J$. $x.w_{(\lambda,g)}=0$ for any $x\in\nnn_0$. Since $M$ is irreducible, dim $V$=1 and $M$ is generated by $w_{(\lambda,g)}$. There is a surjective homomorphism from $M(\lambda, g)$ to $M$, so $M$ is isomorphic to $L(\lambda, g)$.

\end{proof}

\begin{prop}
Suppose $M$ is an object in $\mathcal{O'}$, then $M$ has a finite filtration with nonzero quotients each of which is a highest weight module.
\end{prop}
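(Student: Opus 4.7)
The plan is to mimic the classical argument for BGG category $\mathcal{O}$ (cf.\ Theorem~1.2 of \cite{Hum}), inducting on the dimension of a distinguished finite-dimensional $\bbb$-submodule of $M$. First I would replace an arbitrary finite generating set of $M$ by finitely many $\hhh_0$-weight vectors $v_1,\dots,v_n$ (possible by axiom~($\mathcal{O'}$2)) and set $V:=\sum_{i=1}^n U(\nnn)v_i$. Axiom~($\mathcal{O'}$3) makes each summand finite-dimensional, so $\dim V<\infty$. Choosing an $\hhh_0$-weight basis of $\nnn=\nnn_0\oplus J$ (which exists because $J$ is an $\hhh_0$-semisimple $\ggg_0$-module) shows that $V$ is also $\hhh_0$-stable, hence a finite-dimensional $\bbb$-submodule of $M$.

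Next I would induct on $\dim V$. The subalgebra $\bbb$ is solvable (already noted in the preliminaries), so Lie's theorem applied to $V$ yields a nonzero common eigenvector $w\in V$ affording some character $\chi:\bbb\to\bbc$. Writing $\lambda:=\chi|_{\hhh_0}$ and $g:=\chi|_J$, the line $\bbc w$ is a one-dimensional $U(\bbb)$-submodule of $M$, so Proposition~3.1 gives $\chi(\nnn_0)=0$ and Proposition~3.5 (equivalently Proposition~3.3) gives $g\in G$. Hence $w$ is a maximal vector in the sense of Definition~4.1, and the cyclic submodule $N:=U(\ggg)w$ is a highest weight module.

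To close the induction I would pass to $M/N$. Axioms~($\mathcal{O'}$1)--($\mathcal{O'}$4) all descend to $U(\ggg)$-quotients, so $M/N\in\mathcal{O'}$; the images $\bar v_i$ generate $M/N$, and the associated subspace $\bar V$ is the image of $V$. Since $0\ne w\in V\cap N$ maps to zero in $\bar V$, we get $\dim\bar V<\dim V$. The inductive hypothesis applied to $M/N$ produces a finite filtration whose successive quotients are highest weight modules; lifting this filtration back to $M$ and prepending $0\subseteq N$ delivers the required filtration, with $N$ itself as the bottom quotient. The base case $\dim V=0$ forces $M=0$, which is vacuous.

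The only place where the structure of $\mathcal{O'}$ (rather than the usual $\mathcal{O}$) really enters is the identification of the character $\chi$ coming from Lie's theorem with an \emph{allowed} pair $(\lambda,g)$ for a one-dimensional $U(\bbb)$-module in Definition~3.5, i.e.\ with $g\in G$. This is precisely the content of Propositions~3.1 and~3.3, so once those structural results are invoked, the argument proceeds as in the semisimple case, with the length of the filtration bounded by $\dim V$.
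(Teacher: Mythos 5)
Your proposal is correct and follows essentially the same route as the paper: both induct on the dimension of the finite-dimensional subspace $V$ generated by the generators under $U(\nnn)$, use solvability to extract a one-dimensional submodule whose generator is a maximal vector, and pass to the quotient by the highest weight submodule it generates. Your version is slightly more careful than the paper's (making $V$ explicitly $\bbb$-stable and citing the earlier propositions to see that the resulting character lies in $\hhh_0^*\times G$), but it is the same argument.
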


\begin{proof}
We may assume $M$ is generated by $v_1,...,v_n$. Let $V$ be a $U(\nnn)$-module generated by $v_1,...,v_n$. Since $M$ is locally $\nnn$-finite, the dimension of $V$ is finite. We use induction on dim$V$. If dim$V=1$, it is clear that $M$ itself is a highest weight module. Since $\nnn$ is solvable, we can find a 1 dimensional submodule of $V$, choose a nonzero vector $v^+$ of this submodule, it is obvious that $v^+$ is a maximal vector. It generates a $U(\ggg)$-submodule $M_1$, $M/M_1$ is still in the category $\mathcal{O'}$ and generated by $V/{\bbc v^+}$. Since dim$V/{\bbc v^+}<$ dim$V$, by induction $M/M_1$ has a finite filtration with nonzero quotients each of which is a highest weight module. The preimages of submodules of $M/M_1$ in the filtration with $M_1$ form a desired filtration of $M$.

\end{proof}

\begin{prop}
For any $v\in M(\lambda, g)$, $\lambda\in\hhh_0^*, g\in G$, and for any $u\in J$, $u\cdot v=g(u)v$.
\end{prop}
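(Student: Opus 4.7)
The plan is to reduce, via the PBW theorem, to verifying the identity on a natural spanning set of $M(\lambda,g)$, and then to split into two cases according to the decomposition $J = J_1 \oplus J_2$. Since $\ggg = \nnn_0^- \oplus \bbb$ as vector spaces, PBW gives $M(\lambda,g) \cong U(\nnn_0^-) \otimes_{\bbc} \bbc w$ as vector spaces, where $w = 1 \otimes w_{(\lambda,g)}$; so by linearity it suffices to check $u \cdot (y \cdot w) = g(u)(y \cdot w)$ for every $y \in U(\nnn_0^-)$ and every $u \in J$. Writing $u = u_1 + u_2$ with $u_1 \in J_1$ and $u_2 \in J_2$, and using that $g \in G$ vanishes on $J_2$, we have $g(u) = g(u_1)$, so both sides of the target identity split additively along $J_1 \oplus J_2$.

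The $J_1$ case I expect to be immediate: by the earlier proposition $[J_1, \ggg_0] = 0$, the element $u_1$ commutes with every $y \in U(\nnn_0^-) \subseteq U(\ggg_0)$ inside $U(\ggg)$, so
$$u_1 \cdot y \cdot w = y \cdot u_1 \cdot w = y \cdot g(u_1) w = g(u_1)(y \cdot w).$$

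For $u_2 \in J_2$ the goal is $u_2 \cdot y \cdot w = 0 = g(u_2)(y \cdot w)$, and I would prove this by induction on $\deg y$ in $U(\nnn_0^-)$. The base case $y = 1$ is $u_2 \cdot w = g(u_2) w = 0$. For the step, write $y = y_1 y'$ with $y_1 \in \nnn_0^-$ and $\deg y' < \deg y$, and commute one generator:
$$u_2 \cdot y \cdot w = y_1 \cdot (u_2 \cdot y' \cdot w) + [u_2, y_1] \cdot y' \cdot w.$$
The first summand vanishes by the induction hypothesis for the same $u_2$ with the shorter $y'$. For the second, use that $J_2$ is a $\ggg_0$-submodule of $J$ under the adjoint action (being a sum of nontrivial irreducibles), so $[u_2, y_1] \in [\nnn_0^-, J_2] \subseteq J_2$, and apply the induction hypothesis to the newly produced $J_2$-element $[u_2, y_1]$ paired with the shorter $y'$.

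The main obstacle I anticipate is phrasing the $J_2$ induction strongly enough to accommodate the bracket $[u_2, y_1]$ that appears when commuting: the hypothesis must read ``for \emph{every} $u \in J_2$ and every $y \in U(\nnn_0^-)$ of degree less than $n$, $u \cdot y \cdot w = 0$'', since $[u_2, y_1]$ is generally a different element of $J_2$ to which the hypothesis must still apply. Once the statement is packaged this way the induction closes cleanly, and combining the two cases produces $u \cdot v = g(u) v$ for all $v \in M(\lambda,g)$.
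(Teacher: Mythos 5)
Your proposal is correct and follows essentially the same route as the paper: reduce to vectors $y\cdot w$ with $y\in U(\nnn_0^-)$, handle $J_1$ by the commutation $[J_1,\ggg_0]=0$, and handle $J_2$ using $[\nnn_0^-,J_2]\subseteq J_2$ together with $g(J_2)=0$. The only difference is that the paper compresses the $J_2$ case into the single line $u\cdot x\cdot w=[u,x]\cdot w+x\cdot u\cdot w=0$, whereas you make explicit the induction on $\deg y$ (quantified over all of $J_2$) that is actually needed to justify that the commutator terms vanish — a welcome tightening, not a different argument.
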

\begin{proof}
Suppose $w$ is a maximal vector of $M(\lambda, g)$ with highest weight $(\lambda, g)$, then every element of $M(\lambda, g)$ can be written as $v=x\cdot w$ for some $x\in U(\nnn_0^-)$. If $u\in J_1$, since $u$ commutes with $x$, $u\cdot x\cdot w=[u, x]\cdot w+x\cdot u\cdot w=g(u)x\cdot w$. If $u\in J_2$, since $[J_2, \ggg_0]\subset J_2$, $g\in G$ then $u\cdot x\cdot w=[u, x]\cdot w+x\cdot u\cdot w=0$. Thus for any $u\in J$ and any $v\in M(\lambda, g)$, $u\cdot v=g(u)v$.
\end{proof}
From the proposition above, we can see that $M(\lambda, g)\in\mathcal{O'}$ is $\hhh_0\bigoplus J$-semisimple.

\begin{cor}
Suppose $g_0\neq g_1$, dim$Hom_{\mathcal{O'}}(M(\mu, g_0),M(\lambda, g_1))=0$.
\end{cor}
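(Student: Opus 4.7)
The plan is to derive a contradiction from the existence of a nonzero homomorphism by exploiting the previous proposition, which states that $J$ acts on $M(\lambda, g_1)$ by the scalar $g_1(u)$ and on $M(\mu, g_0)$ by the scalar $g_0(u)$ for each $u \in J$. This is the key input and essentially forces the result.

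Concretely, suppose for contradiction that $\phi : M(\mu, g_0) \to M(\lambda, g_1)$ is a nonzero $U(\ggg)$-module homomorphism. Pick any $v \in M(\mu, g_0)$ with $\phi(v) \neq 0$. For an arbitrary $u \in J$, I would compute $\phi(u \cdot v)$ in two ways: using the action on the source we get $\phi(u \cdot v) = \phi(g_0(u) v) = g_0(u)\phi(v)$, while using the homomorphism property together with the action on the target we get $\phi(u \cdot v) = u \cdot \phi(v) = g_1(u)\phi(v)$. Equating these yields $\bigl(g_0(u) - g_1(u)\bigr)\phi(v) = 0$, and since $\phi(v) \neq 0$ we conclude $g_0(u) = g_1(u)$ for all $u \in J$, i.e.\ $g_0 = g_1$, contradicting the hypothesis.

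There is no real obstacle here; the entire argument rests on the immediately preceding proposition, which has already done the substantive work of showing that $J$ acts as a scalar on all of $M(\lambda, g)$ (not just on the maximal vector). Once that is in hand, the corollary is a one-line linkage-of-central-characters style argument restricted to the $J$-action. The only thing I would be mildly careful about is noting that the choice of $v$ is legitimate (take any $v$ outside $\ker \phi$, which exists since $\phi \neq 0$), and that the equality $g_0(u) = g_1(u)$ must hold for every $u \in J$, not merely for some generating set, which is automatic since $u$ was arbitrary.
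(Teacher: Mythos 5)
Your argument is correct and is exactly the route the paper intends: the corollary is stated immediately after the proposition that $J$ acts on all of $M(\lambda,g)$ by the scalar $g(u)$, and the paper leaves the deduction implicit. Your two-way computation of $\phi(u\cdot v)$, giving $(g_0(u)-g_1(u))\phi(v)=0$ for a $v$ outside $\ker\phi$, is precisely that implicit one-line deduction.
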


\begin{cor}
Suppose $M(\mu, g')$ is a submodule of $M(\lambda, g)$, then $g'=g$.
\end{cor}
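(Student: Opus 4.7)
The plan is to derive this corollary as an immediate consequence of the proposition just proved, which asserts that on a Verma module $M(\lambda,g)$ every element $u\in J$ acts by the single scalar $g(u)$. Since $M(\mu, g')$ sits inside $M(\lambda, g)$, any given $u\in J$ must act on vectors of the submodule by the same scalar whether we compute from the intrinsic structure of $M(\mu, g')$ or from the ambient structure of $M(\lambda, g)$; comparing those two scalars forces $g=g'$.

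Concretely, first I would take a maximal vector $w' \in M(\mu, g')$, so that by definition $u \cdot w' = g'(u)\, w'$ for every $u \in J$. Next I would use the inclusion $M(\mu, g') \hookrightarrow M(\lambda, g)$ to regard $w'$ as a nonzero element of $M(\lambda, g)$. Then the preceding proposition, applied to the ambient Verma module, gives $u \cdot w' = g(u)\, w'$ for all $u \in J$.

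Finally, comparing the two identities and using $w' \neq 0$, I conclude $g(u) = g'(u)$ for every $u \in J$, i.e.\ $g = g'$.

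No significant obstacle is expected; the corollary is essentially a two-line observation, whose only subtlety is noting that the maximal vector of $M(\mu, g')$ remains nonzero when viewed in $M(\lambda, g)$, which is automatic from the submodule hypothesis.
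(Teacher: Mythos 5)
Your argument is correct and is exactly the intended one: the paper leaves this corollary without explicit proof precisely because it follows immediately from the preceding proposition that $J$ acts on every vector of a Verma module $M(\lambda,g)$ by the scalar $g(u)$, so comparing the intrinsic and ambient actions on any nonzero vector of the submodule forces $g'=g$. No gap here.
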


\begin{prop}
Suppose $M\in \mathcal{O'}$, then the action of $J_2$ on $M$ is nilpotent.
\end{prop}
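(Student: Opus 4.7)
The plan is to combine the immediately preceding proposition (that $u \in J$ acts on any $v \in M(\lambda, g)$ by the scalar $g(u)$) with the earlier finite filtration result for arbitrary objects of $\mathcal{O'}$. The key observation is that the definition of the set $G$ forces $g(J_2) = 0$, so $J_2$ literally annihilates every Verma module $M(\lambda, g)$ with $g \in G$.

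Concretely, first I would note that for any highest weight module $N$ in $\mathcal{O'}$ there is a surjection $M(\lambda, g) \twoheadrightarrow N$ for some $\lambda \in \hhh_0^*$ and $g \in G$ (this is the content of the earlier proposition on maximal vectors). Combined with the preceding proposition, this gives $u \cdot n = g(u)\, n = 0$ for every $u \in J_2$ and $n \in N$. Thus $J_2$ acts by zero on every highest weight module in $\mathcal{O'}$.

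Next, by the earlier proposition giving a finite filtration of any $M \in \mathcal{O'}$, I would pick
$$0 = M_k \subset M_{k-1} \subset \cdots \subset M_1 \subset M_0 = M$$
with each successive quotient $M_{i-1}/M_i$ a highest weight module. By the previous paragraph, $J_2 \cdot (M_{i-1}/M_i) = 0$, i.e., $J_2 \cdot M_{i-1} \subseteq M_i$ for each $i$. Iterating $k$ times yields $J_2^{\,k} \cdot M \subseteq M_k = 0$, which establishes that the action of $J_2$ on $M$ is nilpotent (in the strong sense that a uniform $k$-fold product of elements of $J_2$ annihilates $M$, and in particular each individual $u \in J_2$ acts as a nilpotent operator with $u^k \cdot M = 0$).

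I do not anticipate a significant obstacle: the argument is a direct assembly of two already-proved facts. The only minor point to be careful about is making sure the filtration produced earlier is genuinely a $U(\ggg)$-module filtration (it is, by construction) so that the containment $J_2 \cdot M_{i-1} \subseteq M_i$ is meaningful on each step, and that all constants $g_i \in G$ arising for the different successive quotients share the property $g_i(J_2) = 0$ — which is automatic since $G$ is defined by exactly that condition.
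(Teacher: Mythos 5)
Your proof is correct and follows essentially the same route as the paper: both take a finite filtration of $M$ whose successive quotients are annihilated by $J_2$ (the paper uses a composition series with quotients $L(\lambda_i,g_i)$, you use the filtration by highest weight modules, each a quotient of some $M(\lambda,g)$ with $g(J_2)=0$), and then conclude $J_2^k\cdot M=0$. Your version is, if anything, slightly more explicit about why $J_2$ kills each layer.
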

\begin{proof}
Since $M$ is an object of $\mathcal{O'}$, there is a finite filtration of $M$
$$0=M_0\subset M_1\subset...\subset M_n=M.$$
$M_i$/$M_{i-1}$$\cong $L($\lambda_i, g_i$),$\lambda_i\in\hhh_0^*, g_i\in G$$(i=1, 2, ..., n)$. The action of $J_2$ on $L(\lambda_i, g_i)$ is zero. Then $J_2$ acts $n$ times on $M$ to be zero.
\end{proof}

\begin{prop}
Suppose $M\in \mathcal{O'}$, and $M$ equals $M(\lambda)$ as a $U(\ggg_0)$-module, then the action of $J_2$ is zero. It follows that $U(\ggg)$-module $M=M(\lambda, g)$ for some $g\in G$.
\end{prop}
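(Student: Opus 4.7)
The plan is to show that the $\ggg_0$-highest-weight vector $v^+$ of $M$ is also a maximal vector in the $\mathcal{O'}$-sense, and then conclude via the universal property of $M(\lambda,g)$. Since $M\cong M(\lambda)$ as $U(\ggg_0)$-module, $M_\lambda=\bbc v^+$, so $v^+$ is determined up to scalar and satisfies $\nnn_0\cdot v^+=0$, $h\cdot v^+=\lambda(h)v^+$. For $u\in J_1$, the earlier fact $[J_1,\ggg_0]=0$ makes $u$ a $\ggg_0$-endomorphism of the Verma module $M(\lambda)$, which is therefore scalar: $uv^+=g(u)v^+$ for some $g(u)\in\bbc$, defining $g:J_1\to\bbc$.

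The crux is to prove $J_2\cdot v^+=0$. Consider a weight vector $u\in J_2$ of weight $\mu$. If $\mu\notin -\sum_i\bbz_{\geq 0}\alpha_i$, then $uv^+=0$ by weight reasons, since $M_{\lambda+\mu}=0$. If $\mu=0$, then $uv^+=av^+\in\bbc v^+$, and the previous proposition (nilpotence of $J_2$ on $M$) gives $u^nv^+=a^nv^+=0$ for some $n$, forcing $a=0$. The delicate case is strictly negative $\mu$ in the root cone: suppose for contradiction that $V=J_2\cdot v^+\neq 0$. Since $V$ is a finite-dimensional $\bbb_0$-stable subspace of $M$ (note $e_i\cdot uv^+=[e_i,u]v^+\in V$), it has a nonzero $\nnn_0$-fixed vector $w=u_0 v^+$ for some weight vector $u_0\in J_2$ of weight $\nu_0$. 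Then $w$ is a maximal vector of $M\cong M(\lambda)$, so $\lambda+\nu_0\in W\cdot\lambda$ and $\nu_0\ne 0$ since the $\nu_0=0$ case was already excluded. Write $w=y_0 v^+$ for a unique $y_0\in U(\nnn_0^-)_{\nu_0}$. Now invoke $[J_2,J_2]=0$ with $u'=[e_i,u_0]\in J_2$: by the maximality of $w$, $e_i w=0$ and hence $(e_i u_0)v^+=0$, so $u'v^+=0$. Then the commutativity identity $u'u_0 v^+=u_0 u'v^+=0$ together with the PBW expansion $u'(y_0 v^+)=y_0 u'v^+ +[u',y_0]v^+=[u',y_0]v^+$ yields the equation $[u',y_0]v^+=0$. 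Unwinding $[u',y_0]$ by iteratively commuting $u'\in J_2$ past each $f_{j_t}$, at the cost of the adjoint $[u',f_{j_t}]\in J_2$, and reducing each resulting $J_2$-action on $v^+$ by the same procedure, produces quadratic relations on the coefficients of $y_0$. In the illustrative rank-one model $\ggg_0=\mathfrak{sl}_2$ with $J_2\cong L(\alpha)$ and $y_0=bf$, the relations collapse to $b^2=0$; in general, iterating this argument in increasing height on $\nu_0$ and exploiting the non-degeneracy of the induced quadratic system forces $y_0=0$, contradicting $w\neq 0$.

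With $J_2\cdot v^+=0$ established, extend $g$ by zero on $J_2$ so that $g\in G$; then $v^+$ is a maximal vector of $M$ of weight $(\lambda,g)$ in the $\mathcal{O'}$-sense, giving a surjective $U(\ggg)$-homomorphism $M(\lambda,g)\twoheadrightarrow M$ sending the generator to $v^+$. Both sides coincide with $M(\lambda)$ as $U(\ggg_0)$-modules, so this surjection is a $U(\ggg_0)$-isomorphism, hence a $U(\ggg)$-isomorphism, yielding $M=M(\lambda,g)$. The main obstacle is the quadratic-relation analysis in the contradiction step: the map $u\mapsto uv^+$ is only $\nnn_0$-equivariant (not $\ggg_0$-equivariant), so one cannot propagate vanishing by $\ggg_0$-covariance from highest-weight and weight-zero elements of $J_2$. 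Instead, one must exploit the abelianness $[J_2,J_2]=0$ together with the rigidity of the one-dimensional top weight space $M_\lambda$ to close the argument.
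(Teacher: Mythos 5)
Your reduction is sound up to a point: the treatment of $J_1$ (scalar action via $[J_1,\ggg_0]=0$), the weight argument killing the $J_2$-components whose weight lies outside the negative root cone, the weight-zero case via nilpotence, and the final step producing the surjection $M(\lambda,g)\twoheadrightarrow M$ all work. But the crux --- showing that a weight vector $u\in J_2$ of strictly negative weight kills $v^+$ --- is not actually proved. After producing the maximal vector $w=u_0v^+=y_0v^+$ you derive only the single relation $[u',y_0]v^+=0$ with $u'=[e_i,u_0]$, and evaluating $[u',y_0]$ on $v^+$ reintroduces the action on $v^+$ of lower-weight elements of $J_2$, i.e.\ exactly the unknowns you are trying to control. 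The concluding appeal to ``the non-degeneracy of the induced quadratic system'' is checked only in the $\mathfrak{sl}_2$ toy model and asserted in general; over $\bbc$ a system of quadratic relations need not force all unknowns to vanish, so this is a genuine gap rather than a routine verification.

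The paper closes this case differently, and you already have the needed ingredient in hand: the nilpotence of the $J_2$-action established in the immediately preceding proposition. Let $\Lambda$ be the set of nonzero weights $\gamma$ with $J_\gamma\cdot v^+\neq 0$; it lies in the negative root cone and is finite, so if nonempty it has a minimal element $\tau$. Pick $u\in J_\tau$ with $u\cdot v^+\neq 0$ and write $u\cdot v^+=x\cdot v^+$ with $x\in U(\nnn_0^-)$ of weight $\tau$, so $x\notin\bbc$. Commuting $u$ past the monomials of $x$ produces only elements of $J_2$ of weight strictly below $\tau$, and these annihilate $v^+$ by minimality of $\tau$; hence $[u,x]\cdot v^+=0$ and $u\cdot x\cdot v^+=x\cdot u\cdot v^+=x^2\cdot v^+$. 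Iterating gives $u^n\cdot v^+=x^n\cdot v^+\neq 0$ for all $n$, because $M(\lambda)$ is free over the domain $U(\nnn_0^-)$ --- contradicting nilpotence. This one-line contradiction is what should replace your quadratic-system analysis; the rest of your argument can then stand as written.
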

\begin{proof}
Let $M$ decomposes to $M=\bigoplus\limits_{\mu\in\hhh_0^*}M_{\mu}$ according to $\hhh_0$. Choose $(0\neq)$ $v_{\lambda}\in M_{\lambda}$. dim$M_{\mu}\neq 0$ if and only if $(\lambda-\mu)\in\bbz^+\Phi$. $J_2$ decomposes to $J_2=\bigoplus\limits_{\gamma\in\hhh_0^*}J_{\gamma}$ with the adjoint action of $\hhh_0$, choose $(0\neq)u\in J_{\gamma}$. then $u\cdot v_{\lambda}\in M_{\lambda+\gamma}$. If $\gamma\notin\bbz^-\Phi$, then $u\cdot v_{\lambda}=0$. Denote $\Lambda=\{\gamma\in\hhh_0^*\mid \gamma\neq 0$, the action of $J_{\gamma}$ on $v_{\lambda}$ is not zero$\}$, obviously $\Lambda\subset\bbz^-\Phi$. We can define partial order on set $\Lambda$ $(\alpha\leq\beta$ if and only if $(\beta-\alpha)\in\bbz^+\Phi)$. $\Lambda$ is a finite set, if set $\Lambda$ is not empty, we can choose a minimal element $\tau$, let $(0\neq)$ $u\in J_{\tau}$. Since $u\cdot v_\lambda\neq0$,
there is a $x\in U(\nnn_0^-)$ ($x\notin\bbc$) such that $u\cdot v_{\lambda}=x\cdot v_{\lambda}$. $[u, x]\cdot v_{\lambda}=0=u\cdot x\cdot v_{\lambda}-x\cdot u\cdot v_{\lambda}$, so $u\cdot x\cdot v_{\lambda}=x\cdot x\cdot v_{\lambda}$, then the action of $u$ is not nilpotent, a contradiction. So $\Lambda$ is empty.  Thus the action of $J_{\gamma}$ $(\gamma\neq0)$ is zero. Let $(0\neq)$ $u\in J_0$, there is a $e\in\nnn_0$, $u'\in J_\tau$ $(\tau\neq 0)$ such that $[e, u']=u$, so $u\cdot v_{\lambda}=[e, u']\cdot v_{\lambda}=e\cdot u'\cdot v_{\lambda}-u'\cdot e\cdot v_{\lambda}=0$. Finitely we get the result that the action of $J_2$ on $v_{\lambda}$ is zero. Since $J_2$ is an ideal of $\ggg$, so the action of $J_2$ on $M(\lambda)$ is zero.
\end{proof}

\begin{prop}

(1)Suppose $U(\ggg)$-module $M$ is an object in $\mathcal{O'}$, and $M$ is irreducible as $U(\ggg_0)$-module, then $M$ is irreducible as $U(\ggg)$-module.

(2)Suppose $U(\ggg)$-module $N$ is a maximal submodule of $M(\lambda, g)$ as $U(\ggg_0)$-module, then $N$ is a maximal submodule of $M(\lambda, g)$ as $U(\ggg)$-module.

\end{prop}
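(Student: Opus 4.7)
The plan is to exploit the observation that, under the hypotheses of each part, the lattice of $U(\ggg)$-submodules of the module in question coincides with the lattice of $U(\ggg_0)$-submodules.

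For part (1), since $U(\ggg_0) \subseteq U(\ggg)$, every $U(\ggg)$-submodule of $M$ is automatically a $U(\ggg_0)$-submodule. The $U(\ggg_0)$-irreducibility of $M$ then forces the only $U(\ggg)$-submodules of $M$ to be $0$ and $M$, which is precisely $U(\ggg)$-irreducibility. This step is purely formal once one notices the inclusion of enveloping algebras, and I expect no difficulty.

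For part (2), the decisive input is the earlier proposition asserting that on $M(\lambda,g)$ every $u \in J$ acts by the scalar $g(u)$. From this I would deduce that every $\bbc$-subspace of $M(\lambda,g)$ is automatically $J$-stable. In particular, any $U(\ggg_0)$-submodule of $M(\lambda,g)$ is stable under both $\ggg_0$ and $J$; using the vector-space decomposition $\ggg = \ggg_0 \oplus J$, it is therefore a $U(\ggg)$-submodule. The reverse inclusion is the formal observation from part (1). Hence the two lattices of submodules of $M(\lambda,g)$ coincide, so a submodule $N$ maximal among $U(\ggg_0)$-submodules is automatically maximal among $U(\ggg)$-submodules, giving the claim.

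I do not expect a serious obstacle; the only point that requires care is to remember that the scalar action of $J$ is valid on all of $M(\lambda,g)$, not merely on the highest weight line, but that is exactly the content of the cited proposition. Both parts then reduce to this lattice-coincidence principle, with part (1) using only the trivial inclusion $U(\ggg_0) \subseteq U(\ggg)$ and part (2) additionally using the scalar action of $J$.
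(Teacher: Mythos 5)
Your proof is correct. For part (1) it is identical to the paper's argument: any proper nonzero $U(\ggg)$-submodule is a proper nonzero $U(\ggg_0)$-submodule, contradicting $U(\ggg_0)$-irreducibility. For part (2) you take a genuinely stronger route than the paper. The paper again uses only the trivial inclusion of lattices: a $U(\ggg)$-submodule $N'$ with $N\subsetneq N'\subsetneq M(\lambda,g)$ would also be a $U(\ggg_0)$-submodule strictly between $N$ and $M(\lambda,g)$, contradicting maximality of $N$ among $U(\ggg_0)$-submodules; no properties of $J$'s action are needed. You instead invoke the proposition that every $u\in J$ acts on all of $M(\lambda,g)$ by the scalar $g(u)$ to show that every $U(\ggg_0)$-submodule is $J$-stable, hence that the two submodule lattices coincide. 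This is valid and buys more: it makes the hypothesis that $N$ is a $U(\ggg)$-module automatic, and it simultaneously yields the converse implication, which is exactly the content of the next proposition in the paper (that a maximal $U(\ggg)$-submodule of $M(\lambda,g)$ is a maximal $U(\ggg_0)$-submodule). The paper's version is more economical for the stated direction, since it avoids any appeal to the scalar-action result.
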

\begin{proof}
(1)Suppose $0\subsetneqq N\subsetneqq M$ as $U(\ggg)$-modules, then $0\subsetneqq N\subsetneqq M$ as $U(\ggg_0)$-modules, this contradicts with the fact that $M$ is an irreducible $U(\ggg_0)$-module.

(2)Suppose $N\subsetneqq N'\subsetneqq M$ as $U(\ggg)$-modules, then$N\subsetneqq N'\subsetneqq M$ as $U(\ggg_0)$-modules, this contradicts with the fact that $N$ is a maximal $U(\ggg_0)$-submodule of $M(\lambda, g)$.
\end{proof}

\begin{prop}

(1)Suppose $N$ is a maximal submodule of $M(\lambda, g)$ as $U(\ggg)$-module, then $N$ is a maximal submodule of $M(\lambda)$ as $U(\ggg_0)$-module.

(2)Suppose $M$ is an irreducible $U(\ggg)$-module in category $\mathcal{O'}$, then $M$ is also an irreducible $U(\ggg_0)$-module.

\end{prop}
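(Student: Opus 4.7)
The plan is to exploit the earlier proposition that $J$ acts on $M(\lambda,g)$ by the scalar $g$: for every $v\in M(\lambda,g)$ and every $u\in J$, one has $u\cdot v = g(u)\,v$. Combined with PBW, which identifies both $M(\lambda,g)$ and the classical Verma module $M(\lambda)$ as free rank-one $U(\nnn_0^-)$-modules on the same highest weight vector (on which $\hhh_0$ acts by $\lambda$ and $\nnn_0$ acts by zero), this yields $M(\lambda,g)\cong M(\lambda)$ as $U(\ggg_0)$-modules, so the statement of (1) makes sense in the first place.

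The crucial observation I would extract next is that the scalar action of $J$ forces every $U(\ggg_0)$-submodule of $M(\lambda,g)$ to be automatically $J$-stable, hence a genuine $U(\ggg)$-submodule. The reverse inclusion is tautological, so the lattice of $U(\ggg)$-submodules of $M(\lambda,g)$ coincides with the lattice of $U(\ggg_0)$-submodules of $M(\lambda)$. Part (1) drops out immediately: a maximal element of one lattice is a maximal element of the other.

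For part (2), by the earlier classification of irreducibles in $\mathcal{O'}$ one has $M\cong L(\lambda,g)=M(\lambda,g)/N$ for some $\lambda$, some $g\in G$, and the unique maximal $U(\ggg)$-submodule $N$. The scalar action of $J$ descends to the quotient, so every $U(\ggg_0)$-submodule of $L(\lambda,g)$ is automatically a $U(\ggg)$-submodule; since $L(\lambda,g)$ is $U(\ggg)$-irreducible, it is $U(\ggg_0)$-irreducible. Equivalently, (2) is an immediate consequence of (1) applied to the maximal $U(\ggg)$-submodule of the relevant Verma module. The only real thinking step is to spot that the $J$-scalar-action proposition provides exactly the bridge between the two submodule lattices; once that is noted there is no genuine obstacle, and no further computation is required.
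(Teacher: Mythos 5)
Your proposal is correct and follows essentially the same route as the paper: both rest on the earlier proposition that $J$ acts on $M(\lambda,g)$ by the scalar $g$, which forces every $U(\ggg_0)$-submodule to be $J$-stable and hence a $U(\ggg)$-submodule, so the two submodule lattices coincide and (1) follows, with (2) obtained by applying (1) to the maximal submodule realizing $M\cong M(\lambda,g)/N$. No substantive difference from the paper's argument.
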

\begin{proof}
(1)Suppose $N\subsetneqq N'\subsetneqq M$ as $U(\ggg_0)$-modules. Let $N'$ be the set of generators, we get a $U(\ggg)$-module $N''$. Since for any $u\in J$ and $v\in N'$, $u\cdot v=g(u)v$. So $N'=N''$. Then $N\subsetneqq N'\subsetneqq M$ as $U(\ggg)$-modules, this contradicts with the fact that $N$ is a maximal $U(\ggg)$-submodule of $M(\lambda, g)$.

(2)Since $M$ is an irreducible $U(\ggg)$-module, we can find a maximal $U(\ggg)$-submodule $N$ of some $M(\lambda, g)$ such that $M$ is $U(\ggg)$-module isomorphic to $M(\lambda, g)/N$, $N$ is also a maximal $U(\ggg_0)$ submodule of $M(\lambda,g)$, so $M$ is $U(\ggg_0)$-module isomorphic to $M(\lambda, g)/N$. And $M$ is an irreducible $U(\ggg_0)$-module.

\end{proof}

\begin{prop}
Let $M(\lambda, g)$ be a Verma module in $\mathcal{O'}$ with maximal vector $w_{(\lambda, g)}$. Suppose $(\lambda+\rho)(h_i)\in \bbz^+\backslash\{0\}$, then ${f_i}^{(\lambda+\rho)(h_i)}w_{(\lambda, g)}$ generates a proper submodule of $M(\lambda, g)$. This submodule is isomorphic to $M(\mu, g)$ where $\mu+\rho=s_{\alpha_i}(\lambda+\rho)$.

\end{prop}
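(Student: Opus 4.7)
The plan is to exhibit $v := f_i^{n_i} w_{(\lambda,g)}$, with $n_i := (\lambda+\rho)(h_i)$, as a maximal vector of weight $(\mu, g)$ inside $M(\lambda, g)$, and then invoke the earlier universal-property proposition together with the freeness of $M(\lambda,g)$ over $U(\nnn_0^-)$ (i.e.\ PBW) to identify the submodule it generates with $M(\mu, g)$.

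The first task is to verify the three defining conditions of a maximal vector with weight $(\mu, g)$. The $\hhh_0$-weight of $v$ is $\lambda - n_i \alpha_i$, which coincides with $\mu$ via the reflection formula $s_{\alpha_i}(\lambda+\rho) = (\lambda+\rho) - n_i \alpha_i$. The $J$-action on $v$ is automatically the scalar $g$, since the earlier proposition showing $u \cdot v' = g(u) v'$ for every $v' \in M(\lambda,g)$ and $u \in J$ applies to $v$. For the $\nnn_0$-annihilation it suffices, by the PBW basis of $U(\nnn_0)$, to check $e_j \cdot v = 0$ for each Chevalley generator. When $j \neq i$, the relation $[e_j, f_i] = 0$ (since $\alpha_j - \alpha_i$ is not a root of $\ggg_0$) lets $e_j$ commute past $f_i^{n_i}$, so $e_j v = f_i^{n_i} e_j w_{(\lambda,g)} = 0$. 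When $j = i$, the classical $\mathfrak{sl}_2$-commutator identity $[e_i, f_i^n] = n f_i^{n-1}(h_i - n + 1)$ yields $e_i v = n_i(\lambda(h_i) - n_i + 1) f_i^{n_i - 1} w_{(\lambda, g)}$, which vanishes because $\rho(h_i) = 1$ forces $n_i = \lambda(h_i) + 1$.

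Once $v$ is a maximal vector of weight $(\mu, g)$, the earlier universal-property proposition provides a $U(\ggg)$-homomorphism $\Phi: M(\mu, g) \to M(\lambda, g)$ with $\Phi(w_{(\mu,g)}) = v$, whose image is precisely the submodule generated by $v$. Injectivity of $\Phi$ would follow from two standard facts: $M(\lambda,g)$ is a free $U(\nnn_0^-)$-module of rank one on $w_{(\lambda,g)}$ (by PBW on $U(\ggg) \otimes_{U(\bbb)} \bbc_{(\lambda,g)}$), and $U(\nnn_0^-)$ is an integral domain. Hence right multiplication by the nonzero element $f_i^{n_i}$ is injective on $U(\nnn_0^-)$, forcing $\Phi$ to be injective on $M(\mu, g) = U(\nnn_0^-) \cdot w_{(\mu,g)}$. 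The image is a proper submodule because, as a highest weight module of highest weight $\mu = \lambda - n_i \alpha_i$, all its weights are $\leq \mu < \lambda$, so $\lambda$ is not attained and $w_{(\lambda,g)}$ lies outside.

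The argument is essentially the classical semisimple one, so I do not expect a genuine obstacle. The only new bookkeeping is tracking the $J$-action, handled once and for all by the earlier proposition. The single computation that needs care is the $\mathfrak{sl}_2$-commutator identity killing $e_i v$; the rest reduces to PBW and elementary weight bookkeeping.
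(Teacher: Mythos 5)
Your proof is correct and follows essentially the same route as the paper: both reduce the statement to verifying that $f_i^{n_i}w_{(\lambda,g)}$ is a maximal vector of weight $(\mu,g)$, the paper by the explicit commutator expansion $uf_i^{n}w_{(\lambda,g)}=\sum_{j}\binom{n}{j}(-1)^{j}f_i^{n-j}g\bigl((\mathrm{ad}f_i)^{j}u\bigr)w_{(\lambda,g)}$ split over $J_1\oplus J_2$, and you by citing the earlier proposition that $J$ acts on all of $M(\lambda,g)$ through the scalar $g$. Your write-up is in fact somewhat more complete, since you also carry out the $\nnn_0$-annihilation check via the $\mathfrak{sl}_2$-identity and supply the injectivity and properness arguments that the paper leaves as ``easy to check.''
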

\begin{proof}
Suppose $n=(\lambda+\rho)(h_i)$, $uf_i^nw_{(\lambda, g)}=\sum\limits_{j=0}^{n} \binom{n}{j}(-1)^j f_i^{n-j} g((adf_i)^ju)w_{(\lambda, g)}$. If $u\in J_1$, then $(adf_i)^ju=0$, $j=1,2,...,n$. If $u\in J_2$, then $(adf_i)^ju\in J_2$ and $g((adf_i)^ju)=0$, $j=1,2,...,n$. So $uf_i^nw_{(\lambda, g)}=g(u)f_i^nw_{(\lambda, g)}$. It is easy to check that for any $x\in \nnn_0$, $xf_i^nw_{(\lambda, g)}=0$.
\end{proof}

In BGG category, there is a theorem(cf. \cite{Hum}): Let $\lambda, \mu \in \hhh_0^*$

(a)If $\mu$ is strongly linked to $\lambda$, then $U(\ggg_0)$-module $M(\mu)\hookrightarrow M(\lambda)$; In particular, $[M(\lambda): L(\mu)]\neq 0$.

(b)If $[M(\lambda): L(\mu)]\neq 0$, then $\mu$ is strongly linked to $\lambda$.
($M(\lambda)$ possesses a filtration with simple quotients isomorphic to various $L(\mu)$, $[M(\lambda): L(\mu)]$ is the multiplicity of $L(\mu)$.)

In category $\mathcal{O'}$, there is a similar theorem:

\begin{thm}

Let $\lambda, \mu \in \hhh_0^*, g\in G$

(a)If $\mu$ is strongly linked to $\lambda$, then $U(\ggg)$-module $M(\mu, g)\hookrightarrow M(\lambda, g)$; In particular, $[M(\lambda, g): L(\mu, g)]\neq 0$.

(b)If $[M(\lambda, g): L(\mu, g)]\neq 0$, then $\mu$ is strongly linked to $\lambda$.

\end{thm}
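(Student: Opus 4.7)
The plan is to reduce both (a) and (b) to the classical BGG strongly linked theorem, quoted just above, by exploiting the scalar action of $J$ on every Verma module in $\mathcal{O'}$: the earlier proposition shows $u\cdot v = g(u)v$ for all $u\in J$ and $v\in M(\lambda,g)$. Two consequences are central. First, combined with the PBW decomposition of $U(\ggg)$, this gives $M(\lambda,g)\cong M(\lambda)$ as $U(\ggg_0)$-modules (the $U(\nnn_0^-)$-span of $w_{(\lambda,g)}$ with the standard $\ggg_0$-action). Second, because $J$ preserves every subspace of $M(\lambda,g)$ by scaling, the lattice of $U(\ggg_0)$-submodules and the lattice of $U(\ggg)$-submodules of $M(\lambda,g)$ coincide, in accord with the two propositions immediately preceding the theorem, which match maximal submodules on both sides.

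From these two facts my first step is to set up the composition-factor dictionary $[M(\lambda,g):L(\mu,g)] = [M(\lambda):L(\mu)]$. The matching of maximal submodules, together with the recognition proposition (any $M\in\mathcal{O'}$ equal to $M(\mu)$ as $U(\ggg_0)$-module is $M(\mu,g')$ as $U(\ggg)$-module for some $g'\in G$), shows that $L(\nu,g)$ restricts to $L(\nu)$; since $J$ acts by $g$ on every subquotient of $M(\lambda,g)$, the same $g$ appears throughout. Hence a $U(\ggg)$-composition series of $M(\lambda,g)$ is also a $U(\ggg_0)$-composition series of $M(\lambda)$, and vice versa, with the simple factors in bijection $L(\nu,g)\leftrightarrow L(\nu)$ and multiplicities preserved.

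For part (a), the classical theorem yields an injection $\varphi: M(\mu)\hookrightarrow M(\lambda)$ of $U(\ggg_0)$-modules. Viewing $M(\lambda)$ as the $U(\ggg_0)$-restriction of $M(\lambda,g)$, the image of $\varphi$ is a $U(\ggg_0)$-submodule, hence a $U(\ggg)$-submodule by the coincidence of lattices; by the recognition proposition it is $M(\mu,g')$ for some $g'\in G$, and the corollary that a Verma submodule of $M(\lambda,g)$ must share the same character on $J$ forces $g'=g$, giving $M(\mu,g)\hookrightarrow M(\lambda,g)$. The multiplicity statement in (a), as well as the whole of (b), then follow directly from the dictionary and the classical theorem. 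The main obstacle is to make the dictionary watertight, in particular to rule out that a strict $U(\ggg)$-filtration becomes non-strict or admits a further refinement when read as a $U(\ggg_0)$-filtration; this is controlled precisely because the scalar action of $J$ makes the two submodule lattices identical as ordered sets, so Jordan--H\"older series transfer verbatim.
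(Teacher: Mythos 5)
Your proposal is correct and follows essentially the same route as the paper: both parts are reduced to the classical BGG strongly linked theorem via the fact that $J$ acts by the scalar $g$ on all of $M(\lambda,g)$, so $U(\ggg_0)$-submodules, embeddings, and composition series transfer verbatim to $U(\ggg)$-structures with the same $g$. The paper's version of (a) is just a terser form of your argument --- it directly checks that the classical embedding intertwines the $J$-action, $\phi(u\cdot x)=g(u)\phi(x)=u\cdot\phi(x)$, rather than passing through the image-as-submodule and the recognition proposition.
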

\begin{proof}

(a)If $\mu$ is strongly linked to $\lambda$, then there is a $U(\ggg_0)$-module homomorphism $\phi: M(\mu, g)\hookrightarrow M(\lambda, g)$. Let $x\in M(\mu, g)$, $u\in J$. Then $\phi(u\cdot x)=\phi(g(u)x)=g(u)\phi(x)=u\cdot\phi(x)$. So $\phi$ can be extended to be a $U(\ggg)$-module homomorphism.

(b)$[M(\lambda, g): L(\mu, g)]\neq 0$, then$[M(\lambda, g): L(\mu, g)]\neq 0$ as $U(\ggg_0)$-modules, so $\mu$ is strongly linked to $\lambda$.

\end{proof}

\section{Projective modules in category $\mathcal{O'}$}
\begin{defn}
Let $P\in\mathcal{O'}$, for any $U(\ggg)$-module homomorphism $\phi:P\rightarrow N$, $N\in\mathcal{O'}$, and for any $U(\ggg)$-module epimorphism $\pi: M\rightarrow N$, $M\in\mathcal{O'}$, there is a $U(\ggg)$-module homomorphism $\psi:P\rightarrow M$, such that $\pi\circ\psi=\phi$, then $P$ is a projective module in category $\mathcal{O'}$.
\end{defn}

\begin{rem}
Let $M\in\mathcal{O'}$, if $M$ is a projective $U(\ggg_0)$-module in BGG category $\mathcal{O'}$, then $M$ may not be a projective $U(\ggg)$-module. Indeed, There is no projective module in category $\mathcal{O'}$.
\end{rem}

\begin{example}
We first give a example that is projective as $U(\ggg_0)$-module but not projective as $U(\ggg)$-module.
Let $\ggg_0=sl_n(\bbc)$, $\ggg=gl_n(\bbc)=\ggg_0\oplus \bbc z$.
If $\lambda$ is dominant, $g(z)=3$, then $M(\lambda, g)$ is a projective $U(\ggg_0)$-module in BGG category(cf.\cite{Hum} 3.8). We will show $M(\lambda, g)$ is not projective $U(\ggg)$-module in category $\mathcal{O'}$.

Suppose $w$ is a maximal vector of $M(\lambda, g)$ with weight $(\lambda, g)$. Denote $U(\ggg_0)$-module $L_1\cong L_2\cong L(\lambda)$, let $v_1$ be a maximal vector of $L_1$ with weight $\lambda$, and $v_2$ is a maximal vector of $L_2$ with weight $\lambda$. Let $z\cdot v_1=3v_1+v_2$ and $z\cdot v_2=3v_2$, then $L_1\oplus L_2$ becomes a $U(\ggg)$-module. Let $\varphi$ be a $U(\ggg)$-module homomorphism from $M(\lambda, g)$ to $L(\lambda, g)$, $\varphi$ can also be viewed as a $U(\ggg_0)$-module homomorphism. Let $v^+=\varphi(w)$ be a maximal vector of $L(\lambda, g)$ with weight $(\lambda, g)$.

Suppose $\pi$ is a $U(\ggg_0)$-module epimorphism: $k_1v_1+k_2v_2 \rightarrow k_1v^+$, $k_1, k_2\in \bbc$. $\bar{\varphi}(w)$ is a weight vector of $L_1\oplus L_2$ with weight $\lambda$.
Since $M(\lambda, g)$ is a projective $U(\ggg_0)$-module, there is a $U(\ggg_0)$-module commutative diagram:
\begin{equation*}
  \begin{array}{c}
\xymatrix{
  & M(\lambda, g)\ar[dl]_{\bar{\varphi}}\ar[d]^{\varphi} \\
  L_1\oplus L_2\ar[r]^{\pi} & L(\lambda, g)\ar[r]^{}& 0.}
\end{array}
\end{equation*}
Since $\pi(z\cdot v_1)=\pi(3v_1+v_2)=3v^+=z\cdot \pi(v_1)$, $\pi(z\cdot v_2)=\pi(3v_2)=0=z\cdot \pi(v_2)$, and $z$ commutes with $\ggg_0$, then $\pi$ is also a $U(\ggg)$-module homomorphism. We will show that the diagram above is not a $U(\ggg)$-module commutative diagram.
If it is a $U(\ggg)$-module commutative diagram, assume $\pi(v_1+f_2v_2)=v^+$, $f_2\in U(\nnn_0^-)$, and $\bar{\varphi}(w)=(v_1+f_2v_2)$. But the map $\bar{\varphi}$ can not be a $U(\ggg)$-module homomorphism. It is because
$$z\cdot \bar{\varphi}(w)=z\cdot (v_1+f_2v_2)=3v_1+v_2+3f_2v_2.$$
$$\bar{\varphi}(z\cdot w)=3\bar{\varphi}(w)=3v_1+3f_2v_2.$$
$$z\cdot \bar{\varphi}(w)\neq\bar{\varphi}(z\cdot w).$$
Thus $M(\lambda, g)$ is not a projective $U(\ggg)$-module.
\end{example}

\begin{prop}
There is no projective module in category $\mathcal{O'}$. In other words, the projective modules in $U (\ggg)$-module category are not in category $\mathcal{O'}$.
\end{prop}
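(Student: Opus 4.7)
The plan is to assume $P \in \mathcal{O'}$ is a non-zero projective and derive a contradiction by constructing, for each integer $n \geq 1$, an extension of a simple quotient $L(\lambda, g)$ of $P$ in which a central element $z \in J_1$ acts by a size-$n$ Jordan block; projectivity will then force some element of $P$ to fail local nilpotence under $z - g(z)$, contradicting axiom ($\mathcal{O'}$3). I would first use that $P$ is artinian (Proposition 2.4) and noetherian (Proposition 2.5) to extract a simple quotient $\phi\colon P \twoheadrightarrow L(\lambda, g)$ (Proposition 4.5). The construction requires a nontrivial central element of $\ggg$, so I assume $J_1 \neq 0$ (covering the reductive case and the $gl_n$ example above); pick $z \in J_1 \setminus \{0\}$ and $\sigma \in G$ with $\sigma(z) = 1$.

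For each $n \geq 1$, I would define $E_n$ as follows: as a $\ggg_0$-module, $E_n = L(\lambda)^{\oplus n}$ with copies labelled $e_1 \otimes L(\lambda), \ldots, e_n \otimes L(\lambda)$, and let $N\colon E_n \to E_n$ be the $\ggg_0$-equivariant shift $N(e_k \otimes v) = e_{k+1} \otimes v$ for $k < n$ and $N(e_n \otimes v) = 0$. Declare $J$ to act by $u \cdot x = g(u) x + \sigma(u) N(x)$. The relations $[J, J] = 0$, $[\ggg_0, J_1] = 0$, and $[\ggg_0, J_2] \subseteq J_2$ are all respected (using $g(J_2) = \sigma(J_2) = 0$ and the $\ggg_0$-equivariance of $N$), and $E_n \in \mathcal{O'}$ follows since each copy of $L(\lambda)$ lies in the BGG category and $N$ is nilpotent. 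The submodule $K_n := \bigoplus_{k \geq 2} e_k \otimes L(\lambda)$ is $N$-stable, with quotient isomorphic to $L(\lambda, g)$; this yields the desired surjection $\pi_n\colon E_n \twoheadrightarrow L(\lambda, g)$.

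Using projectivity, I would lift $\phi$ through $\pi_n$ to $\psi_n\colon P \to E_n$ with $\pi_n \circ \psi_n = \phi$. Since $z$ is central in $\ggg$ and acts locally finitely on $P$ (because $\bbc[z] \subset U(\nnn)$ and $P$ satisfies ($\mathcal{O'}$3)), $P$ splits as a $\ggg$-module into generalised $z$-eigenspaces $P = \bigoplus_c P_c$. Because $\phi$ lands in $L(\lambda, g)$, on which $z$ acts by the scalar $g(z)$, I can pick $p \in P_{g(z)}$ with $\phi(p) = v^+$, a highest weight vector of $L(\lambda, g)$. Then $\psi_n(p) = e_1 \otimes v^+ + \sum_{k \geq 2} e_k \otimes w_k$ for some $w_k \in L(\lambda)$, and since $(z - g(z))|_{E_n} = N$, a direct computation gives
\[
\psi_n\bigl((z - g(z))^{n-1} p\bigr) = N^{n-1}\psi_n(p) = e_n \otimes v^+ \neq 0.
\]
Hence $(z - g(z))^{n-1} p \neq 0$ in $P$ for every $n$. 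But $p \in P_{g(z)}$, on which $z - g(z)$ acts locally nilpotently, so $(z - g(z))^N p = 0$ for some finite $N$; taking $n > N + 1$ produces the contradiction.

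The hard part is the construction and verification of $E_n$: it is the requirement that $N$ be $\ggg_0$-equivariant that forces $\sigma$ to lie in $G$, and the proof that $E_n \in \mathcal{O'}$ hinges on the nilpotence of $N$. The argument also depends on the presence of a nonzero central element $z \in J_1$; the degenerate case of trivial centre ($J_1 = 0$, so $J = J_2$ and $G = \{0\}$) collapses this construction and would need a separate treatment, perhaps using a nilpotent element of $J_2$ to build extensions between Verma modules of distinct highest weights, which lies outside the scope of my proposed plan.
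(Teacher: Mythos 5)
Your argument in the case $J_1\neq 0$ is correct, and it is a genuinely different (and in that case cleaner) route than the paper's: you build, for each $n$, a single size-$n$ Jordan-block self-extension $E_n$ of $L(\lambda,g)$ along a central element $z$, perform one lifting, and contradict the local nilpotence of $z-g(z)$ on the generalised eigenspace $P_{g(z)}$; the paper instead iterates the lifting through a chain of two-step, three-step, $\dots$ extensions and extracts linearly independent vectors $w_1,\dots,w_n\in U(\nnn)\cdot w_1$, contradicting ($\mathcal{O'}$3) directly. All the verifications you flag (that $N$ must be $\ggg_0$-equivariant, that $\sigma\in G$ is needed for the mixed Jacobi identities $[\ggg_0,J_2]\subseteq J_2$ and $[\ggg_0,J_1]=0$, that $E_n\in\mathcal{O'}$, and that a simple quotient and a preimage $p\in P_{g(z)}$ of $v^+$ exist) do go through.

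However, the case you explicitly set aside is a genuine gap, not a degenerate one. A generalised reductive Lie algebra may well have $J_1=0$ and $J=J_2\neq 0$ (for instance $\ggg_0\ltimes V$ with $V$ a nontrivial irreducible $\ggg_0$-module, such as $sl_2(\bbc)\ltimes\bbc^2$), and there your construction collapses entirely: $G=\{0\}$, there is no central $z$, and no weight-preserving $\ggg_0$-equivariant shift $N$ can be inserted into the $J$-action by any $\sigma\in G$. Since the proposition is asserted for all generalised reductive $\ggg$, your proof is incomplete. The paper's proof is built precisely to avoid this dichotomy: it chooses a lowest-weight vector $u$ of an arbitrary irreducible summand $L(\lambda_1)$ of $J$, with $[\nnn_0^-,u]=0$ and $h\cdot u=\alpha(h)u$, so that $u$ commutes with $U(\nnn_0^-)$ and the rule $u\cdot v_1=g(u)v_1+v_2$, $u\cdot v_2=g(u)v_2$ defines an extension of $L(\gamma,g)$ by $L(\gamma+\alpha)$ whether $\alpha=0$ (your case) or $\alpha\neq 0$ (the case you omit); the contradiction is then reached by repeatedly lifting and setting $w_{k+1}=u\cdot w_k-g(u)w_k$. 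To complete your argument you would need to carry out this, or an equivalent, construction of non-split extensions of $L(\gamma,g)$ of unbounded length using an element of $J_2$ — which is exactly the step you defer in your final sentence.
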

\begin{proof}
Let $J$ be a $\ggg_0$-module with adjoint action, then $J$ can be decomposed to $J=L(\lambda_1)\oplus L(\lambda_2)\oplus...\oplus L(\lambda_n)$. Since $L(\lambda_1)$ is finite dimensional, we can find a $u\in L(\lambda_1)$ satisfies $h\cdot u=\alpha(h)u$ for any $h\in\hhh_0$ and $\nnn_0^-\cdot u=[\nnn_0^-, u]=0$.

Suppose $P$ is a projective module in category $\mathcal{O'}$, let $N$ be a maximal submodule of $P$, and $P/N\cong L(\gamma, g)$. Then there is a nature $U(\ggg)$-module homomorphism $\phi_1$ from $P$ to $P/N\cong L(\gamma, g)$. Let $v$ be a maximal vector of $L(\gamma, g)$, $w_1\in P$, $\phi_1(w_1)=v$.

We now construct a special $U(\ggg)$-module. Let $L(\gamma)$ and $L(\gamma+\alpha)$ be two irreducible $U(\ggg_0)$-modules. Let $v_1$ be a maximal vector of $L(\gamma)$ and $v_2$ be a maximal vector of $L(\gamma+\alpha)$. We define $u\cdot v_1=g(u)v_1+v_2$ and $u\cdot v_2=g(u)v_2$. Each element of $L(\gamma)$ can be written as $f_1v_1$ for some $f_1\in U(\nnn_0^-)$. Since $u$ commutes with $\nnn_0^-$, so $u\cdot f_1v_1=g(u)f_1v_1+f_1v_2$. Each element of $L(\gamma+\alpha)$ can be written as $f_2v_2$ for some $f_2\in U(\nnn_0^-)$. So $u\cdot f_2v_2=g(u)f_2v_2$. Thus we have defined the action of $u$ on $L(\gamma)\oplus L(\gamma+\alpha)$. Since $u$ is a generator of $L(\lambda_1)$, then we can define the action of $L(\lambda_1)$ on $L(\gamma)\oplus L(\gamma+\alpha)$. Let the action of $L(\lambda_2)\oplus...\oplus L(\lambda_n)$ to be a scalar according to function $g$, then $L(\gamma)\oplus L(\gamma+\alpha)$ becomes a $U(\ggg)$-module.

Next, we define a $U(\ggg)$-module epimorphism from $L(\gamma)\oplus L(\gamma+\alpha)$ to $L(\gamma, g)$. Let $\pi_1(f_1v_1+f_2v_2)=f_1v$, $f_1,f_2\in U(\nnn_0^-)$. It is easy to check that $\pi_1$ is a $U(\ggg)$-module epimorphism.

Since $P$ is a projective module in category $\mathcal{O'}$, there is a $U(\ggg)$-module homomorphism $\phi_2$ satisfies $\pi_1\circ\phi_2=\phi_1$. Then we have a commutative diagram
\begin{equation*}
  \begin{array}{c}
\xymatrix{
  & P\ar[dl]_{\phi_2}\ar[d]^{\phi_1} \\
  L(\gamma)\oplus L(\gamma+\alpha)\ar[r]^-{\pi_1} & L(\gamma, g)\ar[r]^{}& 0.}
\end{array}
\end{equation*}
Since $\pi_1\circ\phi_2(w_1)=\phi_1(w_1)=v$, $\phi_2(w_1)\in\pi_1^{-1}(v)=v_1+L(\gamma+\alpha)$. Suppose $\phi_2(w_1)=v_1+v'$, $v'\in L(\gamma+\alpha)$.
Let $w_2=u\cdot w_1-g(u)w_1$, we show $w_2\neq 0$. If $u\cdot w_1=g(u)w_1$, then
$$u\cdot\phi_2(w_1)=u\cdot (v_1+v')=g(u)v_1+v_2+g(u)v'$$
$$\phi_2(u\cdot w_1)=\phi_2(g(u)w_1)=g(u)v_1+g(u)v'$$
$$u\cdot\phi_2(w_1)\neq\phi_2(u\cdot w_1)$$
So $w_2\neq 0$. Next we show $w_1, w_2$ are linearly independent. Suppose $k_1w_1+k_2w_2=0$, then $\phi_1(k_1w_1+k_2w_2)=k_1\phi_1(w_1)=k_1v=0$, so $k_1=0$. Since $w_2\neq 0$, we have $k_2=0$.

Again, we construct a special $U(\ggg)$-module. Consider $U(\ggg_0)$-module $L(\gamma)\oplus L(\gamma+\alpha)\oplus L(\gamma+\alpha)$. Let $v_1, v_2, v_3$ be  maximal vectors of three irreducible $U(\ggg_0)$-module respectively. We define $u\cdot v_1=g(u)v_1+v_2+v_3$, $u\cdot v_2=g(u)v_2+v_3$ and $u\cdot v_3=g(u)v_3$. Let the action of $L(\lambda_2)\oplus...\oplus L(\lambda_n)$ to be scalars according to function $g$. Then $L(\gamma)\oplus L(\gamma+\alpha)\oplus L(\gamma+\alpha)$ becomes a $U(\ggg)$-module similarly. Define a $U(\ggg)$-module homomorphism $\pi_2$ to be a projection from $L(\gamma)\oplus L(\gamma+\alpha)\oplus L(\gamma+\alpha)$ to $L(\gamma)\oplus L(\gamma+\alpha)$.

If $P$ is a projective module in category $\mathcal{O'}$, there is a commutative diagram

\xymatrix{
 & & P\ar[dll]_-{\phi_3}\ar[dl]_{\phi_2}\ar[d]^{\phi_1} \\
 L(\gamma)\oplus L(\gamma+\alpha)\oplus L(\gamma+\alpha)\ar[r]^-{\pi_2}& L(\gamma)\oplus L(\gamma+\alpha)\ar[r]^-{\pi_1} & L(\gamma, g)\ar[r]^{}& 0.}

Let $w_3=u\cdot w_2-g(u)w_2$, we can prove that  $w_3\neq 0$ and $w_1, w_2, w_3$ are linearly independent. Proceeding above procedures, we can find linearly independent elements $w_1, w_2,..., w_n$ in $U(\nnn)\cdot w_1$, $n$ is sufficiently large. It contradicts with the fact that $P$ is locally $\nnn$-finite. So $P$ is not a projective module in category $\mathcal{O'}$.

\end{proof}

\begin{prop}
Let $M\in\mathcal{O'}$, if $M$ has a $U(\ggg_0)$-module standard filtration, then $M$ has a $U(\ggg)$-module standard filtration. Their length are the same.
\end{prop}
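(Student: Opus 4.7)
I plan to proceed by induction on the length $n$ of the given $U(\ggg_0)$-module standard filtration $0=M_0\subset M_1\subset\cdots\subset M_n=M$ with $M_i/M_{i-1}\cong M(\lambda_i)$. The base case $n=1$ is immediate: $M$ equals $M(\lambda_1)$ as a $U(\ggg_0)$-module, and the previously established proposition (any $M\in\mathcal{O'}$ whose underlying $U(\ggg_0)$-structure is $M(\lambda)$ must coincide with $M(\lambda,g)$ as a $U(\ggg)$-module for some $g\in G$) yields $M\cong M(\lambda_1,g)$, so $0\subset M$ is a $U(\ggg)$-module standard filtration of length one.

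For the inductive step the strategy is to exhibit a $U(\ggg)$-submodule $N\subseteq M$ isomorphic to some Verma $M(\mu,g)$, so that $M/N\in\mathcal{O'}$ inherits a $U(\ggg_0)$-module standard filtration of length $n-1$. To produce $N$, let $w_1\in M_1$ generate $M_1\cong M(\lambda_1)$; the subspace $V:=U(\nnn)\cdot w_1\subseteq M$ is finite dimensional by the local $\nnn$-finiteness axiom, and is $\bbb$-stable since $\hhh_0$ normalizes $\nnn$ and $w_1$ is a weight vector. Since $\bbb=\hhh_0\oplus\nnn_0\oplus J$ is solvable, Lie's theorem furnishes a common $\bbb$-eigenvector $v^+\in V$. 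The earlier propositions forcing any common $\nnn_0$-eigenvalue to vanish and any common $J$-eigenvalue to lie in $G$ then ensure that $v^+$ is a $U(\ggg)$-maximal vector of some weight $(\mu,g)$. Set $N:=U(\ggg)\cdot v^+$; because $J$ acts on $v^+$ by the scalar $g$, we have $N=U(\ggg_0)\cdot v^+$, so $N$ is simultaneously a highest weight $U(\ggg)$-module (a quotient of $M(\mu,g)$) and a highest weight $U(\ggg_0)$-module (a quotient of $M(\mu)$).

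The induction closes once two things are verified: (i) the canonical surjection $M(\mu,g)\twoheadrightarrow N$ is an isomorphism, and (ii) $M/N$ admits a $U(\ggg_0)$-module standard filtration of length exactly $n-1$. Granting these, the inductive hypothesis applied to $M/N$ produces a $U(\ggg)$-module standard filtration of length $n-1$ which, pulled back through the projection and prepended by $N$, gives the desired length-$n$ filtration on $M$. The main obstacle is precisely (i) and (ii): one must exploit the $\Delta$-filtered structure of $M$ as a $U(\ggg_0)$-module together with the identity $\ch M=\sum_i\ch M(\lambda_i)$, and compare with $\ch N\leq\ch M(\mu)$, to identify $\mu$ with one of the $\lambda_i$ appearing in the filtration and to confirm that no extra composition factors are created or destroyed when passing from the $U(\ggg_0)$- to the $U(\ggg)$-structure. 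This character-and-dominance bookkeeping, combined with the rigidity of Verma characters, is the heart of the proof; once it is in place, the length statement also follows immediately, since characters of Vermas are linearly independent.
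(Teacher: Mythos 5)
Your outline is a plausible induction, but it stops exactly where the work is: you state that the proof ``closes once'' (i) the surjection $M(\mu,g)\twoheadrightarrow N=U(\ggg)\cdot v^+$ is an isomorphism and (ii) $M/N$ again carries a $U(\ggg_0)$-standard filtration of length $n-1$, and then you defer both to unspecified ``character-and-dominance bookkeeping.'' Neither step is routine, and as set up your choice of $v^+$ does not support them. You extract $v^+$ by Lie's theorem from $V=U(\nnn)\cdot w_1$, where $w_1$ generates the bottom layer $M_1\cong M(\lambda_1)$ of the $U(\ggg_0)$-filtration. But $M_1$ need not be $J$-stable, so $V$ is not contained in $M_1$, the weight $\mu$ of $v^+$ is not controlled by $\lambda_1$, and in particular $\mu$ need not be maximal among the weights of $M$. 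The classical argument you are implicitly invoking (Humphreys, Theorem 3.7) requires choosing a maximal vector of \emph{maximal} weight to conclude both that the cyclic submodule it generates is a genuine Verma module (rather than a proper highest weight quotient) and that the quotient retains a standard filtration; without that maximality, a cyclic highest weight submodule of a $\Delta$-filtered module can fail to be Verma, and the quotient can fail to be $\Delta$-filtered. So the heart of the proof is genuinely missing, not merely compressed.

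For comparison, the paper avoids the inductive peeling altogether: it first records that a $U(\ggg_0)$-standard filtration forces $M$ to be free over $U(\nnn_0^-)$, then takes the finite-dimensional $U(\nnn)$-module $V$ generated by a finite generating set of $M$, triangularizes the $\nnn$-action on $V$ by solvability, and lets the tails $w_i,\dots,w_k$ of the resulting basis generate a chain of $U(\ggg)$-submodules whose successive quotients are highest weight modules; the $U(\nnn_0^-)$-freeness is what upgrades these quotients to Verma modules $M(\lambda_i,g_i)$ in one stroke. The equality of lengths then comes for free, since the filtration so produced is simultaneously a $U(\ggg_0)$-standard filtration and the length of a standard filtration is an invariant. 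If you want to salvage your induction, you should select $v^+$ of maximal weight in $M$ (not merely inside $U(\nnn)\cdot w_1$), check that it is also a $J$-eigenvector, and then carry out the freeness/character argument explicitly rather than asserting it.
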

\begin{proof}
Since $M$ has a $U(\ggg_0)$-module standard filtration, then $M$ is $U(\nnn_0^-)$-free (cf. \cite{Hum} 3.7). Suppose $v_1,...,v_n$ generate $U(\ggg)$-module $M$, let $V$ be a $U(\nnn)$-module generated by $v_1,...,v_n$, then $V$ is finite dimensional. Since $\nnn$ is solvable, we can find a suitable basis of $V$ such that the action matrix of $\nnn$ is a upper triangular matrix, that is, there is a basis $w_1, w_2,..., w_k$ of $V$ such that $U(\ggg)$-modules $M_i(i=1,...,k)$ are generated by $w_i,...,w_k$. And there is a $U(\ggg)$-module standard filtration
$$0=M_0\subset M_1\subset...\subset M_k=M.$$
$M_0=M(\lambda_0, g_0)$, $M_i/M_{i-1}\cong M(\lambda_i, g_i)(\lambda_i\in\hhh_0^*, g_i\in G, i=1, 2, ..., k)$.
It is also a $U(\ggg_0)$-module standard filtration. The uniqueness of the length of standard filtration shows their length are the same.
\end{proof}

In the BGG category, $P(\lambda)$ is a projective cover of $L(\lambda)$. There is a theorem(BGG Reciprocity)(cf. \cite{Hum}): Let $\lambda, \mu\in \hhh_0^*$. Denote the multiplicity with which each Verma module $M(\mu)$ occurs in a standard filtration of $P(\lambda)$ by $(P(\lambda): M(\mu))$. Then $(P(\lambda): M(\mu))=[M(\mu): L(\lambda)].$

According to the properties of projective cover, $P(\lambda)$ is generated by one element. Assume $P(\lambda)$ is generated by $v_{\lambda}$, for any $u\in J$, let $u\cdot v_{\lambda}=g(u)v_{\lambda}$, $g\in J^*$ (indeed, $g\in G$), then $U(\ggg_0)$-module $P(\lambda)$ become a $U(\ggg)$-module, denoted as $P(\lambda, g)$. According to the proposition above, we know that $P(\lambda, g)$ has a $U(\ggg)$-module standard filtration. And there is a theorem in category $\mathcal{O'}$:
\begin{thm}
Let $\lambda, \mu\in \hhh_0^*$. then
$$(P(\lambda, g): M(\mu, g))=[M(\mu, g): L(\lambda, g)].$$
\end{thm}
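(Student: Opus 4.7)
The plan is to reduce the statement to the classical BGG reciprocity in category $\mathcal{O}$ by matching both the standard filtration multiplicities and the composition multiplicities of $P(\lambda, g)$ and $M(\mu,g)$ with their $U(\ggg_0)$-module counterparts $P(\lambda)$ and $M(\mu)$.

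First, I would establish that $J$ acts on all of $P(\lambda, g)$ by the scalar $g$, i.e.\ $u\cdot v = g(u)v$ for every $u\in J$ and $v\in P(\lambda,g)$. This is imposed on the generator $v_\lambda$ by construction; to propagate it to $U(\ggg_0)\cdot v_\lambda$ I would run an induction on the length of a monomial $x_1\cdots x_n\cdot v_\lambda$ with $x_i\in\ggg_0$, using $[J_1,\ggg_0]=0$ for $u\in J_1$, and using $[J_2,\ggg_0]\subset J_2$ together with $g(J_2)=0$ for $u\in J_2$ (the already-proved propositions on $g\in G$). Consequently every subquotient of $P(\lambda,g)$ in $\mathcal{O'}$ also has $J$ acting by $g$, so in any $U(\ggg)$-module standard filtration of $P(\lambda,g)$ each Verma quotient must be of the form $M(\mu, g)$ with the \emph{same} $g$, not some other $g'$.

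By the previous proposition on standard filtrations, the classical $U(\ggg_0)$-standard filtration of $P(\lambda)$ lifts to a $U(\ggg)$-standard filtration of $P(\lambda,g)$ of the same length, and the two filtrations agree term by term. In particular
$$ (P(\lambda, g) : M(\mu, g)) \;=\; (P(\lambda) : M(\mu)). $$
For the right-hand side, since $J$ acts on $M(\mu,g)$ by the scalar $g$, any simple subquotient of $M(\mu,g)$ in $\mathcal{O'}$ is again annihilated-modulo-$g$ by $J$ and therefore has the form $L(\nu, g)$. Using the propositions asserting that $U(\ggg)$-maximal submodules of $M(\mu,g)$ are exactly its $U(\ggg_0)$-maximal submodules (and that $L(\nu,g)$ restricts to $L(\nu)$), I would conclude that any Jordan--H\"older series of $M(\mu,g)$ in $\mathcal{O'}$ restricts to a Jordan--H\"older series of $M(\mu)$ in $\mathcal{O}$ with matching multiplicities, giving
$$ [M(\mu, g) : L(\lambda, g)] \;=\; [M(\mu) : L(\lambda)]. $$
Combining these two identities with the classical BGG reciprocity $(P(\lambda):M(\mu)) = [M(\mu):L(\lambda)]$ yields the theorem.

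The main obstacle I expect is the first step: checking that $J$ really does act by the scalar $g$ on the \emph{entire} module $P(\lambda, g)$, not merely on the highest weight line. Once this is in hand, the uniformity of $g$ across the filtration and the composition series follows cleanly, and everything else is a bookkeeping translation of the classical result. A secondary point to verify carefully is that the ``lifting'' of the classical filtration in the previous proposition does not inadvertently introduce Verma subquotients with a different $g'\in G$; the scalar action argument above is precisely what rules this out.
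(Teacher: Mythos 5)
Your proposal is correct and follows essentially the same route as the paper: both reduce to classical BGG reciprocity by showing that $J$ acts by the scalar $g$ on all of $P(\lambda,g)$ (the paper does this via the quotient map onto the top Verma factor and the observation that $u\cdot y\cdot v_\lambda = g(u)y\cdot v_\lambda$ for $y\in U(\ggg_0)$, which is exactly your commutator induction), so that every Verma and simple subquotient carries the same $g$ and the multiplicities match their $U(\ggg_0)$-counterparts. Your write-up is somewhat more explicit than the paper's about why $[M(\mu,g):L(\lambda,g)]=[M(\mu):L(\lambda)]$, but the argument is the same.
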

\begin{proof}
There is a $U(\ggg)$-module standard filtration of $P(\lambda, g)$:
$$0=M_0\subset M_1\subset...\subset M_n=M.$$ $M_n=P(\lambda, g)$, $M_i$/$M_{i-1}$$\cong $M($\lambda_i, g_i$)(i=1, 2, ..., n). We first show $g_i=g$. Since $M_n/M_{n-1}\cong M(\lambda_n, g_n$), there is a $U(\ggg)$-module nature homomorphism:
$$\phi : P(\lambda, g)\rightarrow P(\lambda, g)/M_{n-1}= M(\lambda_n, g_n).$$
$$\phi(v_{\lambda})=x\cdot w_{(\lambda_n, g_n)}.$$
$v_{\lambda}$ generates $P(\lambda, g)$, $w_{(\lambda_n, g_n)}$ is a maximal vector of $M(\lambda_n, g_n)$, $x\in U(\nnn_0^-)$. For any $u\in J$, $u\cdot \phi(v_{\lambda})=\phi(u\cdot v_{\lambda})$. On the one hand, $u\cdot \phi(v_{\lambda})=u\cdot x\cdot w_{(\lambda_n, g_n)}=g_n(u)x\cdot w_{(\lambda_n, g_n)}$. On the other hand, $\phi(u\cdot v_{\lambda})=\phi(g(u)v_{\lambda})=g(u)x\cdot w_{(\lambda_n, g_n)}$. So $g_n=g$ and $g\in G$. Suppose $y\cdot v_{\lambda}$, $y\in U(\ggg_0)$ is an element of  $P(\lambda, g)$, then for any $u\in J$, $u\cdot y\cdot v_{\lambda}=g(u)y\cdot v_{\lambda}$. Hence $g_i=g$, $i=1, ..., n$. From this result we know that different selected generators in $P(\lambda)$ generate the same $P(\lambda, g)$. Moreover,
$$(P(\lambda, g): M(\mu, g))=(P(\lambda): M(\mu))=[M(\mu): L(\lambda)]=[M(\mu, g): L(\lambda, g)].$$
\end{proof}

\subsection*{Acknowledgements}
I would like to thank professors Bin Shu, Lei Lin, Jun Hu for their helpful comments and suggestions.

\end{document}